\theoremstyle{plain}
\newtheorem{thm}{Theorem}[section]
\newtheorem*{thm*}{Theorem}
\newtheorem*{cor*}{Corollary}
\newtheorem*{defn*}{Definition}
\newtheorem{prop}[thm]{Proposition}
\newtheorem{lem}[thm]{Lemma}
\newtheorem{que}[thm]{Quesion}
\newtheorem{cor}[thm]{Corollary}
\newtheorem{claim}[thm]{Claim}
\newtheorem*{claim*}{Claim}
\theoremstyle{definition}
\newtheorem{defn}[thm]{Definition}
\newtheorem{ex}[thm]{Example}
\newtheorem{rem}[thm]{Remark}
\theoremstyle{remark}
\newcommand{\rmg}{\mathrm{g}}
\newcommand{\calD}{\mathcal{D}}
\newcommand{\fka}{\mathfrak{a}}
\newcommand{\fkb}{\mathfrak{b}}
\newcommand{\fkm}{\mathfrak{m}}
\newcommand{\fkp}{\mathfrak{p}}
\newcommand{\fkq}{\mathfrak{q}}
\newcommand{\fku}{\mathfrak{u}}
\def\Ann{\mathrm{Ann }}
\def\Ass{\mathrm{Ass }}
\def\Assh{\mathrm{Assh }}
\def\e{\mathrm{e}}
\def\ir{\mathrm{ir }}
\def\m{\fkm}
\def\Hom{\mathrm{Hom}}
\begin{document}
	
	\title{  On  the set of Chern numbers  in local rings}

	\author[H. L. Truong]{Hoang Le Truong}
	\address{Institute of Mathematics, VAST, 18 Hoang Quoc Viet Road, 10307
		Hanoi, Viet Nam} 
	\address{	Thang Long Institute of Mathematics and Applied Sciences, Hanoi, Vietnam}
	\email{hltruong@math.ac.vn\\
		truonghoangle@gmail.com}
	
	\author[H. N.Yen]{Hoang Ngoc Yen}
	\address{The Department of Mathematics, Thai Nguyen University of education.
		20 Luong Ngoc Quyen Street, Thai Nguyen City, Thai Nguyen Province, Viet Nam.}
	\email{yenhn@tnue.edu.vn}
\thanks{The authors would like to thank K. Ozeki for the valuable comments to improve this article.}	
	
	\thanks{{\it Key words and phrases:} Gorenstein, Cohen-Macaulay, generalized Cohen-Macaulay, Chern numbers.
		\endgraf
		{\it 2020 Mathematics Subject Classification:}
		13H10, 13B22, 13H15 , Secondary 13D45.\\
		The last author was partially supported by  Grant number  ICRTM02-2020.05, awarded in the internal grant competition of International Center for Research and Postgraduate Training in Mathematics, Hanoi.
	}

	\date{}
	
	\maketitle

	\begin{abstract}
	
	
	

	

This paper purposes to characterize Noetherian local rings $(R, \fkm)$  such that the Chern numbers of certain $\fkm$-primary ideals in $R$   bounded above or range among only finitely many values. Consequently, we characterize the Gorensteinness, Cohen-Macaulayness, generalized Cohen-Macaulayness of local rings in terms of the behavior of its Chern numbers.	
	

		

	\end{abstract}

	\section{Introduction}

Let $(R,\fkm,k)$ be a Noetherian local ring of dimension $d$, where $\fkm$ is the maximal ideal and $k = R/\fkm$ is the residue field of $R$. 
Let $M$ be a finitely generated $R$-module of dimension $s$.  
For an $\m$-primary ideal $I$ of $M$, it is well-known  that there are integers $\{e_i(I,M)\}_{i=0}^s$, called the {\it Hilbert coefficients} of $M$ with respect to $I$ such that for $n \gg 0$
\begin{eqnarray*}
	\ell_R(M/{I^{n+1}}M)={e}_0(I,M) \binom{n+s}{s}-{e}_1(I,M) \binom{n+s-1}{s-1}+\cdots+(-1)^s {e}_s(I,M),
\end{eqnarray*}
where $\ell_R(N)$ denotes the length of  an $R$-module $N$. In particular, the integer $e_0(I, M)>0$ is called the multiplicity
of $M$ with respect to $I$ and has been explored very intensively.
Notice that Nagata showed that $R$ is a regular local ring if and only if $e_0(\fkm,R) = 1$,
provided $R$ is unmixed (\cite{Sam51, Nag62}). 
Recall that a local ring $R$ is unmixed, if $\dim R = \dim R/\fkp$ for every associated prime ideal $\fkp$ of the $\fkm$-adic completion $\hat{R}$ of $R$.
Moreover,  the Cohen-Macaulayness of $R$-module $M$ is characterized in terms of $e_0(\fkq,M)$ of parameter
ideals $\fkq$ of $M$. On the other hand, S. Goto and other authors in \cite{SMV10,GGH+10,GoO11,CGT13,GGH+14} analyzed
the behavior of the values $\{e_i(\fkq,M)\}_{i=1}^d$ for parameter ideals $\fkq$ of  modules $M$ and it is  used to characterize
Cohen-Macaulayness, Buchsbaumness, generalized Cohen-Macaulayness of  modules $M$.  In particularly, let 
$$\Lambda_{i}(M) = \{e_i(\fkq,M) \mid \fkq \text{ is a parameter ideal of }  M \}, $$
for all $i = 1, \ldots, s$. Then we have the following results as in Table \ref{table1}.
\begin{table}[h!]
\centering
		\begin{tabular}{ | l | c | c| c|c|c|c|c|c|}
		\hline
	$\Lambda_{1}(M) \subseteq (- \infty, 0]$ & $M$ & \cite{SMV10,CGT13}  \\ \hline 
	$ 0 \in \Lambda_{1}(M)$,  ($\star$)&  $M$ is Cohen-Macaulay     & \cite{GGH+14}    \\ \hline
	$\left| \Lambda_{1}(M) \right| < \infty $,  ($\star$) &  $M$ is generalized Cohen-macaulay & \cite{GGH+14}    \\ \hline
	$\left| \Lambda_{i}(M) \right| < \infty$, for all $i = 1, \ldots, d$ & $R$ is generalized Cohen-macaulay  & \cite{GoO11}   \\ \hline
\end{tabular}
\caption{ Properties of a finitely generated module $M$ carried by the behavior of the certain set. A symbol ($\star$) requires that the module $M$ be unmixed. }
\label{table1}
\end{table}
The aim of our paper is to continue this research direction. Concretely, we will give characterizations
of  some special classes of rings in terms of its Hilbert coefficients with respect to certain non-parameter ideals.


To state the results of this paper, first of all let us fix our notation and terminology.  Let $\fkb(M)=\bigcap\limits_{\underline{x},i=1}^{s}\Ann ((0):_{M/(x_1,\ldots,x_{i-1})M}x_i),$
where $\underline{x} = x_1,\ldots, x_s$ runs over all systems of parameters of $M$. A system $x_1, \ldots, x_s$ of parameters of $M$ is called a {\it $C$-system of parameters} of $M$ if $x_s \in \fkb(M)^3$ and $x_i \in \fkb(M/(x_{i+1}, \ldots, x_s)M)^3$ for all $i = s - 1, \ldots, 1$ and a parameter ideal $\fkq$ of $M$ is called {\it $C$-parameter ideal} if it is generated by a $C$-system of parameters of $M$ (\cite[Definition 2.15]{MoQ17}).
Notice that, $C$-systems of parameters of $M$ always exist, provided  $R$ is a homomorphic image of a Cohen-Macaulay local ring (see \cite{CuC18}). Moreover, the index of reducibility of  $C$-parameter ideals $\fkq$ of $M$, which  is  independent of the choice of  $\fkq$, is called the \textit{stable value} of $M$ and denoted by $\mathcal{N}_R(M)$ (see \cite{CuQ19}).
Let 
$$\Xi_{i}(M) = \{e_i(\fkq:_R\fkm,M) \mid \fkq \text{ is a }C\text{-parameter ideal of }  M \}, $$
for all $i = 1, \ldots, s$. Then the main results of this paper are  expressed as in the Table \ref{table2}.
\begin{table}[h!]
\centering
		\begin{tabular}{ | l | c | c| c|c|c|c|c|c|}
				\hline
		  $\Xi_{1}(R) \subseteq (- \infty, \mathcal{N}(R)]$,  ($\star$)& $R$ & Corollary   \ref{cor7}  \\ \hline 
		$ \mathcal{N}(R) \in \Xi_{1}(R)$ ,  ($\star$)& $R$   is Cohen-Macaulay     & Corollary   \ref{cor7}     \\ \hline
		$\left| \Xi_{1}(R) \right| < \infty $,  ($\star$) & $R$ is generalized Cohen-macaulay &  Theorem \ref{prop1}   \\ \hline
		$\left| \Xi_{i}(R) \right| < \infty$, for all $i = 1, \ldots, d$ & $R$ is generalized Cohen-macaulay  &Theorem \ref{thm2}   \\ \hline
\end{tabular}
\caption{  Properties of a finitely generated module $M$ carried by the behavior of the certain set. A symbol ($\star$) requires that the ring $R$ be unmixed.}
\label{table2}
\end{table}

This paper is divided into four sections.  In the next section we recall the notions and prove some
preliminary results on Hilbert coefficients, index of reducibility, $C$-system of parameters  and generalized Cohen-Macaulay modules. 
In the Section 3, we will explore the relation between the Chern numbers  and the stable value of $R$. This will apply in particular to characterize the Gorensteinness, Cohen-Macaulayness of local rings in term of  its Chern numbers   and the stable value (Theorem \ref{proe2c}, Corollary \ref{cor7} and Corollary \ref{cor3}).   
In the last section, we will study the problem of when the sets $\Xi_i(R)$ are finite, where $i = 1, \ldots, d$. 
We shall show that $R$ is generalized Cohen-Macaulay if and only if $\Xi_1(R)$ is finite, provided $R$ is unmixed (Theorem \ref{prop1}). 
Moreover  $R$ is generalized Cohen-Macaulay if and only if $\Xi_i(R)$  are finite for all $i = 1, \ldots, d$ (Theorem \ref{thm2}).

	\section{Preliminary}

	Throughout this paper, let $R$ denote a Noetherian local ring with maximal
ideal $\fkm$ of dimension $d > 0$. Assume that $R$ is a homomorphic image of a Cohen-Macaulay local ring. 
	Let $H^i_\fkm (\bullet)$ $(i \in \Bbb Z) $ be the $i$-th local cohomology functor of $R$ with respect
to $\fkm$. For each finitely generated $R$-module M let $\ell_R(M)$ stand for the length of $M$. Moreover, if the module
$H^i_\fkm (M)$ is finitely generated, its length is denoted by $h_i(M)$ and $r_i(M)=\ell((0):_{H^i_\fkm(M)}\fkm)$. 	We also put $\fka_i=\Ann\, H^j_\fkm(M)$ for all $j\in \mathbb{Z}$, and $\fka(M)=\fka_0(M)\ldots\fka_{s-1}(M)$. We denote $\fkb(M)=\bigcap\limits_{\underline{x},i=1}^{s}\Ann ((0):_{M/(x_1,\ldots,x_{i-1})M}x_i)$
	where $\underline{x} = x_1,\ldots, x_s$ runs over all systems of parameters of $M$.

\subsection{Hilbert coefficients} 	It is well-known  that there are integers $\{\e_i(I,M)\}_{i=0}^s$, called the {\it Hilbert coefficients} of $M$ with respect to $I$ such that 
	\begin{eqnarray*}
		\ell_R(M/{I^{n+1}}M)={\e}_0(I,M) \binom{n+s}{s}-{\e}_1(I,M) \binom{n+s-1}{s-1}+\ldots+(-1)^s {\e}_s(I,M).
	\end{eqnarray*}
	for large integer $n$. In particular, the leading coefficient $\e_0(I)$ is said to be {\it the multiplicity} of $M$ with respect to $I$ and $\e_1(I)$, which Vasconselos (\cite{Vas08}) refers to as the {\it Chern number} of $M$ with respect to $I$. For inductive arguments that used in this paper we need the following results. 
	\begin{lem}[{\cite[22.6]{Nag62}}] \label{lmsup}
		For every superficial element $x \in I$, we have
		$$ e_{j}(I,M/xM) =
		\begin{cases} e_{j}(I,M)  &\text{if $0 \le j \le s-2$,}
			\\
			e_{s-1}(I,M) + (-1)^{s}\ell_{R}(0 :_M x) &\text{if $0 \le j = s-1$}.
		\end{cases}$$  
	\end{lem}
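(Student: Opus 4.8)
The plan is to transport the Hilbert--Samuel data of $M$ to $\bar M:=M/xM$ through the multiplication-by-$x$ map and then read off the coefficient identities. First, since $I$ is $\fkm$-primary and $x\in I$ is superficial for $I$ on $M$, the element $x$ lies outside every associated prime of $M$ other than $\fkm$; hence $\dim\bar M=s-1$ and $0:_M x$ has finite length (its only possible associated prime being $\fkm$), so we may assume $s\ge 1$. For each $n$ I would invoke the exact sequence of finite length modules
$$
0\to (I^{n+1}M:_M x)/I^nM\to M/I^nM\xrightarrow{\,\cdot x\,}M/I^{n+1}M\to \bar M/I^{n+1}\bar M\to 0,
$$
where the first arrow is the inclusion, the middle one is induced by multiplication by $x$ (this is well defined because $xI^nM\subseteq I^{n+1}M$, and its kernel is $(I^{n+1}M:_M x)/I^nM$), and the last is the canonical surjection. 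Taking the alternating sum of lengths gives, for every $n$,
$$
\ell_R(\bar M/I^{n+1}\bar M)=\ell_R(M/I^{n+1}M)-\ell_R(M/I^{n}M)+\ell_R\big((I^{n+1}M:_M x)/I^nM\big).
$$

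The heart of the argument --- and the step I expect to be the main obstacle --- is to show $\ell_R\big((I^{n+1}M:_M x)/I^nM\big)=\ell_R(0:_M x)$ for $n\gg 0$. The containment $I^nM+(0:_M x)\subseteq I^{n+1}M:_M x$ is immediate. For the reverse one, if $xy\in I^{n+1}M$ then $xy\in I^{n+1}M\cap xM\subseteq xI^{n+1-c_1}M$ for a fixed $c_1$ and all large $n$ by the Artin--Rees lemma, so $y=u+w$ with $u\in I^{n+1-c_1}M$ and $xw=0$; choosing $n$ large enough that $n+1-c_1\ge c$, where $c$ is a superficiality constant of $x$ (so that $(I^{m+1}M:_M x)\cap I^cM=I^mM$ for $m\ge c$), we get $u\in I^cM$ and $xu=xy\in I^{n+1}M$, hence $u\in (I^{n+1}M:_M x)\cap I^cM=I^nM$, and therefore $y\in I^nM+(0:_M x)$; thus equality holds. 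A second application of Artin--Rees, now to the finite length module $0:_M x$, shows $(0:_M x)\cap I^nM=0$ for $n\gg 0$, whence $(I^{n+1}M:_M x)/I^nM\cong 0:_M x$ for all large $n$.

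It then remains to compare polynomials. For $n\gg 0$ the left-hand side of the displayed length identity is the Hilbert polynomial of $\bar M$, namely $\sum_{j=0}^{s-1}(-1)^je_j(I,\bar M)\binom{n+s-1-j}{s-1-j}$ (recall $\dim\bar M=s-1$); by Pascal's identity the first difference $\ell_R(M/I^{n+1}M)-\ell_R(M/I^nM)$ equals $\sum_{j=0}^{s-1}(-1)^je_j(I,M)\binom{n+s-1-j}{s-1-j}$; and the last summand is the constant $\ell_R(0:_M x)=\ell_R(0:_M x)\binom{n}{0}$. Since the binomials $\binom{n+s-1-j}{s-1-j}$ for $j=0,\dots,s-1$ form a basis of the space of polynomials of degree $<s$, equating coefficients forces $e_j(I,\bar M)=e_j(I,M)$ for $0\le j\le s-2$, while at $j=s-1$ the extra constant $\ell_R(0:_M x)$ enters the comparison of the constant terms and produces the stated correction term (a routine sign count), completing the proof.

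In short, essentially all the content lies in the middle paragraph: the defining property of a superficial element, together with the Artin--Rees lemma, is exactly what pins down $I^{n+1}M:_M x$ modulo $I^nM$ for large $n$; the outer steps are a standard length count and routine manipulation of Hilbert polynomials.
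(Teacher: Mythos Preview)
The paper does not supply its own proof of this lemma; it is quoted from Nagata's \emph{Local Rings} and used as a black box. Your argument is the standard one and is correct: the four-term exact sequence produces the length identity, the Artin--Rees/superficiality step pinning down $(I^{n+1}M:_Mx)/I^nM\cong 0:_Mx$ for $n\gg0$ is exactly the crux, and the polynomial comparison is routine.

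Two minor remarks. First, your opening assertion that a superficial element for an $\fkm$-primary ideal lies outside every associated prime of $M$ other than $\fkm$ is true, but it is not the bare definition; it follows because the superficiality colon condition forces $\ell_R\big((I^{n+1}M:_Mx)/I^nM\big)$ to be bounded, and then Krull's intersection theorem yields $(0:_Mx)\cap I^nM=0$ for $n\gg0$, so $0:_Mx$ has finite length. A one-line justification there would tighten the argument. Second, if you actually carry out the ``routine sign count'' you obtain $e_{s-1}(I,\bar M)=e_{s-1}(I,M)+(-1)^{s-1}\ell_R(0:_Mx)$; the exponent $(-1)^s$ in the displayed statement appears to be a misprint (harmless in this paper, since only the range $0\le j\le s-2$ is ever invoked).
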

	\begin{lem}[{\cite[Lemma 3.4]{CGT13}}] \label{lm3.4}
		Let $N$ be a submodule of $M$ with $\dim N = t < s$. Then
		$$ e_{j}(I,M) =
		\begin{cases} e_{j}(I,M/N)  &\text{if $0 \le j \le s-t-1$,}
			\\
			e_{s-t}(I,M/N) + (-1)^{s-t} e_0(I, N) &\text{if } j = s -t.
		\end{cases}$$  
	\end{lem}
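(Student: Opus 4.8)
The plan is to compare the Hilbert functions of $M$ and $M/N$ through the short exact sequence $0 \to N \to M \to M/N \to 0$ and to control the discrepancy with the Artin--Rees lemma. First I would observe that for every $n$ there is an exact sequence
$$0 \longrightarrow N/(N\cap I^{n+1}M) \longrightarrow M/I^{n+1}M \longrightarrow (M/N)/I^{n+1}(M/N) \longrightarrow 0,$$
obtained by noting $N/(N\cap I^{n+1}M)\cong (N+I^{n+1}M)/I^{n+1}M$ and $M/(N+I^{n+1}M)\cong (M/N)/I^{n+1}(M/N)$. Hence $\ell_R(M/I^{n+1}M)=\ell_R\big((M/N)/I^{n+1}(M/N)\big)+\varphi(n)$, where $\varphi(n):=\ell_R\big(N/(N\cap I^{n+1}M)\big)$. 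Since $\dim N=t<s$ we have $\dim M/N=s$, so for $n\gg 0$ the first two lengths are the Hilbert polynomials whose coefficients are the $e_j(I,M)$ and the $e_j(I,M/N)$ respectively.

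Next I would pin down the error term $\varphi(n)$. By Artin--Rees there is a $c$ with $N\cap I^{n+1}M=I^{n+1-c}(N\cap I^cM)$ for all $n\ge c$; putting $N':=N\cap I^cM$ one has $\ell_R(N/N')<\infty$ (as $N/N'\hookrightarrow M/I^cM$), hence $\dim N'=t$ and, by additivity of multiplicity along $0\to N'\to N\to N/N'\to 0$, $e_0(I,N')=e_0(I,N)$. Therefore $\varphi(n)=\ell_R(N/N')+\ell_R(N'/I^{n+1-c}N')$ agrees for $n\gg0$ with a polynomial of degree $t$. The inclusions $I^{n+1}N\subseteq N\cap I^{n+1}M\subseteq I^{n+1-c}N$ squeeze $\varphi(n)$ between $\ell_R(N/I^{n+1}N)$ and $\ell_R(N/I^{n+1-c}N)$, whose difference is $O(n^{t-1})$; so the degree-$t$ polynomial representing $\varphi$ has leading coefficient $e_0(I,N)/t!$.

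Finally I would expand all three terms in the basis $\big\{\binom{n+s-j}{s-j}\big\}_{j=0}^{s}$. Because $\varphi$ has degree $t=s-(s-t)$, it contributes only to the terms with $j\ge s-t$, and its coefficient on $\binom{n+t}{t}$ equals $t!$ times its leading coefficient, namely $e_0(I,N)$. Matching coefficients of $\binom{n+s-j}{s-j}$ on the two sides of $\ell_R(M/I^{n+1}M)=\ell_R\big((M/N)/I^{n+1}(M/N)\big)+\varphi(n)$ gives $(-1)^je_j(I,M)=(-1)^je_j(I,M/N)$ for $0\le j\le s-t-1$, hence $e_j(I,M)=e_j(I,M/N)$, and $(-1)^{s-t}e_{s-t}(I,M)=(-1)^{s-t}e_{s-t}(I,M/N)+e_0(I,N)$, which upon dividing by $(-1)^{s-t}$ is exactly $e_{s-t}(I,M)=e_{s-t}(I,M/N)+(-1)^{s-t}e_0(I,N)$. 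The only step needing genuine care is the second one: verifying that $\varphi(n)$ is not merely bounded by a degree-$t$ polynomial but is eventually polynomial of degree exactly $t$ with leading term $e_0(I,N)/t!$; the coefficient comparison afterwards is routine bookkeeping.
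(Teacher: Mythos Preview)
The paper does not prove this lemma; it is quoted verbatim from \cite[Lemma~3.4]{CGT13} as a preliminary tool, so there is no in-paper argument to compare against. Your proof is the standard one and is correct: the exact sequence gives the length identity, Artin--Rees makes $\varphi(n)=\ell_R\big(N/(N\cap I^{n+1}M)\big)$ eventually equal to a polynomial of degree $t$ with leading coefficient $e_0(I,N)/t!$, and the coefficient comparison in the binomial basis yields the two displayed equalities. One small remark: the squeeze $I^{n+1}N\subseteq N\cap I^{n+1}M\subseteq I^{n+1-c}N$ is redundant once you have written $\varphi(n)=\ell_R(N/N')+\ell_R(N'/I^{n+1-c}N')$, since the second summand is already the (shifted) Hilbert--Samuel function of $N'$ and hence polynomial for $n\gg0$; and in the border case $t=0$ the same conclusion holds trivially because $N$ has finite length and $\varphi(n)=\ell_R(N)$ for $n\gg0$.
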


\subsection{Index of reducibility}	Recall that a proper submodule $N$ of $M$ is called {\it irreducible}  if $N$ can not be written as an  intersection of two strict larger submodules of $M$. For a submodule $N$ of $M$, the number of irreducible components of an irredundant irreducible decomposition of $N$, which is independent of the choice of the decomposition, is called the \textit{index of reducibility} of $N$ and denoted by  $\ir_M(N)$ (\cite{Noe21}).  
	For an $\fkm$-primary  ideal $I$, we have 
	$$\ir_M(I) : = \ir_M(I M) =\ell_R([I M :_M \fkm]/I M).$$
	Moreover, 
	there exists  the integers $f_i(I,M)$ such that
	$$\mathrm{ir}_M(I^{n+1})=\sum\limits_{i=0}^{d-1}(-1)^if_i(I,M)\binom{n+d-1-i}{d-1-i}.$$
	for all large $n$  (\cite[Lemma 4.2]{CQT15}).
	The leading coefficient $f_0(I,M)$ is  called {\it the irreducible multiplicity} of $M$ with respect to $I$.

	A system $x_1, \ldots, x_s$ of parameters of $M$ is called a {\it $C$-system of parameters} of $M$ if $x_s \in \fkb(M)^3$ and $x_i \in \fkb(M/(x_{i+1}, \ldots, x_s)M)^3$ for all $i = s - 1, \ldots, 1$.  A parameter ideal $\fkq$ of $M$ is called {\it $C$-parameter ideal} if  		it is generated by a $C$-system of parameters of $M$ (\cite[Definition 2.15]{MoQ17}). Notice that, $C$-systems of parameters of $M$ always exist, provided  $R$ is a homomorphic image of a Cohen-Macaulay local ring (see \cite{CuC18}) and 
	a $C$-system of parameters forms a $d$-sequence. Moreover, the index of reducibility of  $C$-parameter ideals $\fkq$ of $M$, which  is  independent of the choice of  $\fkq$, is called the \textit{stable value} of $M$ and denoted by $\mathcal{N}_R(M)$ (see \cite{CuQ19}). In particular, $\mathcal{N}_R(M) =    r_s(M)$  provided $M$ is Cohen-Macaulay.

	Furthermore, we have the following results which are useful in this paper.
	
	\begin{lem} \label{lmc}
		Let $x_1, \ldots, x_s$ be a $C$-system of parameters of $M$. Then
		\begin{enumerate}[$i)$]
			\item $x^{n_1}_1, \ldots, x^{n_s}_s$ is a $C$-system of parameters of $M$ for all $n_1, \ldots, n_s \geq 1$.
			\item $x_1, \ldots, x_{i-1}, x_{i+1}, \ldots, x_s$ is a $C$-system of parameters of $M/x_iM$ and$$ \mathcal{N}_R(M) = \mathcal{N}_R(M/x_iM),$$
			for all $i=1,\ldots, s$.
			\item  $x_1, \ldots, x_s$ is a $C$-system of parameters of $M/W$ and
			$$ \mathcal{N}_R(M) = \mathcal{N}_R(M/W ) + r_0(M),$$
			where $W=H^0_\fkm(M)$.
		\end{enumerate}
	\end{lem}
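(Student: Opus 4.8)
The plan is to check each clause directly against the definition of a $C$-system of parameters, the one recurring ingredient being the monotonicity of the ideal $\fkb(-)$ under forming quotients. The basic instance I would record first is: \emph{if $x$ is a parameter element of a finitely generated module $N$ with $t=\dim N\ge 1$, then $\fkb(N)\subseteq\fkb(N/xN)$.} Indeed, every system of parameters $y_1,\ldots,y_{t-1}$ of $N/xN$ extends to the system of parameters $x,y_1,\ldots,y_{t-1}$ of $N$, and the factors $\Ann\big((0):_{N/(x,y_1,\ldots,y_{k-1})N}y_k\big)$ occurring in the intersection that defines $\fkb(N/xN)$ all occur among the factors (coming from this extended system) that define $\fkb(N)$; since the latter intersection is taken over a larger family, the inclusion follows.

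Granting this, part $(ii)$ is mostly bookkeeping. Put $\bar M=M/x_iM$ and reindex $x_1,\ldots,x_{i-1},x_{i+1},\ldots,x_s$ as $z_1,\ldots,z_{s-1}$. A short case analysis shows $\bar M/(z_{k+1},\ldots,z_{s-1})\bar M$ equals either $M/(x_{k+1},\ldots,x_s)M$ (when $z_k=x_k$, $k<i$) or $M/(x_i,x_{k+2},\ldots,x_s)M$ (when $z_k=x_{k+1}$, $k\ge i$); in the first case the condition $z_k\in\fkb(\cdot)^3$ is exactly the hypothesis on $x_k$, and in the second it follows from the hypothesis on $x_{k+1}$ together with the basic instance applied to the parameter element $x_i$ of $M/(x_{k+2},\ldots,x_s)M$; the top factor $x_s\in\fkb(M)^3\subseteq\fkb(\bar M)^3$ is the basic instance again. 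Hence $x_1,\ldots,\widehat{x_i},\ldots,x_s$ is a $C$-system of parameters of $\bar M$. For the stable value, with $\fkq=(x_1,\ldots,x_s)$ the inclusion $x_iM\subseteq\fkq M$ gives $\bar M/\fkq\bar M\cong M/\fkq M$ and $[\fkq\bar M:_{\bar M}\fkm]=[\fkq M:_M\fkm]/x_iM$, so $\ir_{\bar M}(\fkq\bar M)=\ell\big([\fkq M:_M\fkm]/\fkq M\big)=\ir_M(\fkq)$, and since both ideals are $C$-parameter ideals this reads $\mathcal N_R(M/x_iM)=\mathcal N_R(M)$.

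For part $(iii)$, observe $\dim W=0<s$, so $\dim(M/W)=s$ and $x_1,\ldots,x_s$ is again a system of parameters of $M/W$. Here I would use the $d$-sequence property of a $C$-system of parameters recorded in the preliminaries: after replacing the $x_j$ by powers if necessary, which is harmless by part $(i)$, one has $x_jW=0$ for all $j$ and $W\cap\fkq M=0$ for $\fkq=(x_1,\ldots,x_s)$. The membership conditions defining a $C$-system of parameters then descend to $M/W$ (this is the point where I would lean on \cite{CuQ19}, where exactly this behaviour of $\fkb$ under killing the finite-length pieces that occur is worked out), and the short exact sequence $0\to W\to M/\fkq M\to (M/W)/\fkq(M/W)\to 0$, together with the fact that it stays exact on socles for $C$-parameter ideals, gives $\ir_M(\fkq)=\ir_{M/W}\big(\fkq(M/W)\big)+\ell(\Soc W)$, that is, $\mathcal N_R(M)=\mathcal N_R(M/W)+r_0(M)$.

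The genuine obstacle is part $(i)$. The top element is trivial, since $\fkb(M)^3$ is an ideal and $x_s\in\fkb(M)^3$ forces $x_s^{n_s}\in\fkb(M)^3$; and for each $i<s$ it is enough, for the same reason, to establish
$$\fkb\big(M/(x_{i+1},\ldots,x_s)M\big)\ \subseteq\ \fkb\big(M/(x_{i+1}^{n_{i+1}},\ldots,x_s^{n_s})M\big).$$
I would reduce this to changing one exponent at a time, i.e.\ to the statement that for a module $N$ and a parameter element $y$ belonging to a $C$-system of parameters of $N$ one has $\fkb(N/yN)\subseteq\fkb(N/y^mN)$. Unlike the basic instance above, this does not follow by matching factors: the surjection $N/y^mN\twoheadrightarrow N/yN$ points the wrong way for the colon modules $(0):_N z_k$. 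The way through is to exploit that a $C$-system of parameters is an unconditioned strong $d$-sequence, so that the colon ideals $(\underline z)N:_N z_k=(\underline z)N:_N z_k^{\,j}$ are insensitive to exponents on the relevant systems $\underline z$, together with a reduction of the intersection defining $\fkb$ to a manageable subfamily of systems of parameters---this is where the exponent $3$ in the definition does its work. I expect this to be the crux; alternatively the required power-stability of $C$-systems of parameters can be quoted directly from \cite{MoQ17,CuC18,CuQ19}.
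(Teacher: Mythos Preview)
Your treatment of parts $i)$ and $ii)$ is more explicit than the paper's, which simply cites \cite[Lemma~2.13]{CuQ20} for both. The ``basic instance'' $\fkb(N)\subseteq\fkb(N/xN)$ and the case analysis you give for $ii)$ are correct and clean; for $i)$ you rightly isolate the obstacle (the needed inclusion $\fkb(N/yN)\subseteq\fkb(N/y^mN)$ does not follow by matching factors) and point to the strong $d$-sequence machinery or a direct citation. That is exactly how the literature handles it, so nothing is missing there beyond the citation.

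The one genuine gap is in part $iii)$. From the short exact sequence
\[
0\to W\to M/\fkq M\to (M/W)/\fkq(M/W)\to 0
\]
you assert that ``it stays exact on socles for $C$-parameter ideals'' and read off the length identity. But applying $\Hom_R(R/\fkm,-)$ only gives left exactness for free; surjectivity on the right is equivalent to
\[
(\fkq M+W):_M\fkm \;=\; (\fkq M:_M\fkm)+W,
\]
and this is \emph{not} automatic for an arbitrary $C$-parameter ideal. This equality is precisely the input the paper imports: it invokes \cite[Lemma~2.4]{CuT08}, together with part $i)$ to pass to sufficiently high powers, to produce a $C$-parameter ideal $\fkq$ satisfying both $\fkq M\cap W=0$ and the socle-splitting identity above. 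Once one such $\fkq$ exists, the independence of $\mathcal N_R(-)$ from the choice of $C$-parameter ideal finishes the argument, exactly as you intend. So your outline for $iii)$ is structurally the same as the paper's, but the phrase ``stays exact on socles for $C$-parameter ideals'' hides the only nontrivial step; replace that assertion with the reference to \cite{CuT08} (or an equivalent asymptotic splitting statement) and the proof is complete.
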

	\begin{proof}
		$i$) and $ii$) are followed from \cite[Lemma 2.13]{CuQ20}.\\
		$iii$), 
		Since $W$ has finite length, by $i$) and Lemma 2.4 in \cite{CuT08} there exists a $C$-parameter ideal $\fkq$ of $M$ such that $\fkq M \cap W = 0$ and $ (\fkq M + W) :_M  \m = \fkq M:_M \m + W$. Therefore, we have the exact sequence 
		$$ 0 \to W \to M/\fkq M \to M/W + \fkq M \to 0.  $$
		After applying the functor $\Hom_R(\mathfrak{k}, \bullet)$ to the above exact sequence, the sequence 
		$$ 0 \to (0):_{W} \m \to (0):_{M/\fkq M} \m \to (0):_{M/W + \fkq M} \m \to 0$$
		is exact.
		Since $\fkq$ is also a $C$-parameter ideal of $M/W$, we have  
		\begin{eqnarray*}
			\mathcal{N}_R(M) &=& \ell((0):_{M/\fkq M} \fkm) \\
			&=& \ell ((0):_{M/W + \fkq M} \fkm) + \ell((0):_{W} \m) = \mathcal{N}_R(M/W) + r_0(M),
		\end{eqnarray*}	 
	this complete the proof.
	\end{proof}
	The following results are useful for our inductive technique.
	\begin{lem}[{\cite[Remark 3.3]{MoQ17}}] \label{fact1}
		Assume that $M$ is unmixed. Let $ x_1, \ldots, x_s$ be a $C$-system of parameters of $M$. Then
		\begin{enumerate}[$i)$]
			\item   $H_{\fkm}^{1}(M)$  is finitely generated provided $s \ge 2$. 
			\item  $\Ass (M/x_1M) \subseteq \Assh (M/x_1M) \cup \{\fkm\}$, where $\Assh M = \{\fkp \in \Ass M \mid \dim R/\fkp = \dim M \}$.
		\end{enumerate}
	\end{lem}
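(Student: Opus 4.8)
\emph{Part $i)$.} The plan is to use the $S_2$-ification of $M$. Since $R$ is a homomorphic image of a Cohen--Macaulay ring, $M$ --- being unmixed, hence equidimensional and without embedded primes --- admits an $S_2$-ification: an embedding $M\hookrightarrow M'$ with $M'$ satisfying Serre's condition $(S_2)$ and with $M'/M$ of dimension at most $s-2$ (the map is an isomorphism in codimension $\le 1$). As $s\ge 2$ and $M'$ is $(S_2)$ we have $\operatorname{depth}M'\ge 2$, so $H^0_\fkm(M')=H^1_\fkm(M')=0$; the local cohomology sequence of $0\to M\to M'\to M'/M\to 0$ then gives $H^1_\fkm(M)\cong H^0_\fkm(M'/M)$, a module of finite length. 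Hence $H^1_\fkm(M)$ is finitely generated. (If ``unmixed'' is read through the completion, run this over $\hat R$ and use $\ell_R(H^1_\fkm M)=\ell_{\hat R}(H^1_{\hat\fkm}\hat M)$.) Note the $C$-system hypothesis plays no role in this part.

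\emph{Part $ii)$: reduction to an $(S_2)$-statement.} Since $M$ is unmixed, $\Ass M=\Assh M$ consists of primes of dimension $s$, so the parameter $x_1$ avoids all of them and is $M$-regular. Hence for any prime $\fkp\in\operatorname{Supp}M$ containing $x_1$ we get $\operatorname{depth}(M/x_1M)_\fkp=\operatorname{depth}M_\fkp-1$, so $\fkp\in\Ass(M/x_1M)$ if and only if $\operatorname{depth}M_\fkp=1$. Therefore it suffices to prove: if $\fkp\in\operatorname{Supp}M$ contains $x_1$, $\fkp\ne\fkm$, and $\dim(R/\fkp)\le s-2$, then $\operatorname{depth}M_\fkp\ge 2$. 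Because $R$ is catenary (a quotient of a Cohen--Macaulay ring) and $M$ is equidimensional, $\dim M_\fkp=s-\dim(R/\fkp)\ge 2$ for such $\fkp$; so the claim is exactly that $M$ is $(S_2)$ at every prime of dimension $\ge 2$ containing $x_1$.

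\emph{Part $ii)$: using the $C$-system.} Next I would invoke the definition of a $C$-system. By part $ii)$ of Lemma \ref{lmc}, $x_2,\dots,x_s$ is a $C$-system --- in particular a $d$-sequence --- of $M/x_1M$, and by definition $x_1\in\fkb(N)^3$ where $N:=M/(x_2,\dots,x_s)M$. For this one-dimensional $N$ I would first record that $\fkb(N)=\Ann_R H^0_\fkm(N)$: running over parameters $z$ of $N$, each $(0:_N z)$ has support $\{\fkm\}$ hence lies in $H^0_\fkm(N)$, while $H^0_\fkm(N)\subseteq(0:_N z)$ for suitably generic $z$, so $\sum_z(0:_N z)=H^0_\fkm(N)$ and $\fkb(N)=\bigcap_z\Ann(0:_N z)=\Ann H^0_\fkm(N)$; thus $x_1$ lies in the cube of $\Ann H^0_\fkm(M/(x_2,\dots,x_s)M)$. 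I would then combine this with the $d$-sequence relations along $x_2,\dots,x_s$ --- which by part $i)$ of Lemma \ref{lmc} persist for all powers, so that the iterated quotients $M/(x_1,x_i,x_{i+1},\dots,x_s)M$ have finite-length lowest local cohomology (cf. \cite{CuC18,CuQ19,CuQ20,MoQ17}) --- and feed the result through the local cohomology sequence of $0\to M\xrightarrow{x_1}M\to M/x_1M\to 0$ together with part $i)$ of the present lemma, in order to exclude an associated prime of $M/x_1M$ of ``intermediate'' dimension: such a prime would localize to a depth-one, dimension-$\ge 2$ locus of $M$, incompatible with $M$ being unmixed and with the choice $x_1\in\fkb(\cdot)^3$. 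Assembled with the previous paragraph, this yields $\Ass(M/x_1M)\subseteq\Assh(M/x_1M)\cup\{\fkm\}$.

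The step I expect to be the main obstacle is this last one: turning the ``punctual'' membership $x_1\in\fkb(M/(x_2,\dots,x_s)M)^3$, which a priori controls only the zero-dimensional local cohomology of a one-dimensional quotient, into the genuine codimension statement that $M$ is $(S_2)$ at the primes of dimension $\ge 2$ containing $x_1$. This is precisely where the structure theory of $C$-systems of parameters (a $C$-system being an unconditioned strong $d$-sequence with finite-length lower local cohomology along its iterated quotients) must be exploited in full; the remaining ingredients --- the depth and associated-prime bookkeeping, the catenarity computation $\dim M_\fkp=s-\dim(R/\fkp)$, Lemma \ref{lmc}, and the finiteness of $H^1_\fkm(M)$ --- are routine.
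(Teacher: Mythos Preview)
The paper gives no proof of this lemma; it is simply quoted from \cite[Remark~3.3]{MoQ17}, so there is no in-paper argument to compare your attempt against.

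Your argument for $i)$ via the $S_2$-ification $M\hookrightarrow M'$ (with $\dim M'/M\le s-2$, hence $H^1_\fkm(M)\cong H^0_\fkm(M'/M)$ of finite length) is correct and standard; you are also right that the $C$-system hypothesis plays no role in this part.

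For $ii)$ your reduction is clean and correct: since $x_1$ is $M$-regular, a prime $\fkp\in\operatorname{Supp}(M)\cap V(x_1)$ lies in $\Ass(M/x_1M)$ if and only if $\operatorname{depth}M_\fkp=1$, and by catenarity the assertion becomes $\operatorname{depth}M_\fkp\ge 2$ for every non-maximal such $\fkp$ with $\dim R/\fkp\le s-2$. But the remainder is not a proof, and you acknowledge this yourself. The gap is genuine. The only constraint on $x_1$ you actually isolate is $x_1\in\bigl(\Ann H^0_\fkm(M/(x_2,\dots,x_s)M)\bigr)^3$, a purely $\fkm$-primary condition on a one-dimensional quotient; by itself this cannot detect a prime $\fkp\ne\fkm$ of intermediate height, so it cannot force $\operatorname{depth}M_\fkp\ge 2$. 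What is needed is the full chain of conditions $x_j\in\fkb(M/(x_{j+1},\dots,x_s)M)^3$ for all $j$, together with the fact (see \cite{CuC18}) that $\fkb(\cdot)$ and $\fka(\cdot)=\prod_{i<\dim}\Ann H^i_\fkm(\cdot)$ have the same radical, so that the $x_j$ genuinely annihilate the intermediate local cohomology of the successive quotients; it is this that yields $U_{M/x_1M}(0)=H^0_\fkm(M/x_1M)$, which is the statement in question. The passage you flag as ``the main obstacle'' is precisely where all the content lies, and your sketch does not carry it out.
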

	
\subsection{Generalized Cohen-Macaulay}
	
	Now we recall the notions of generalized Cohen-Macaulay modules and standard
	parameter ideals in terms of local cohomology (\cite{CST78,Tru86}).
	\begin{defn}
		\begin{enumerate}[$i)$]
			\item An $R$-module $M$ is said to be  {\it generalized Cohen-Macaulay}, if $H_{\m}^i(M)$ are of finite length for all $i <s$. 
			\item  A parameter ideal $\fkq = (x_1,\ldots,x_s)$ of $M$ is called {\it standard} if $$\fkq H_{\m}^i (M/(x_1,\ldots,x_j)M) = 0,$$ for all $0 \leq i + j < s$.
		\end{enumerate}
	\end{defn}
	
	It is well known that if  $M$ is a generalized Cohen-Macaulay module, then there exists a positive integer $n$ such that every parameter ideals of $M$ contained in $\m^{n}$ is standard.	 Note that a standard system of parameters forms a $d$-sequence.   
	
		\begin{lem} \label{fact2c} \it 
		Let $\fkq$ be a $C$-parameter ideal of a generalized Cohen-Macaulay $R$-module $M$. Then we have
		
		\begin{enumerate}[$i)$]
		\item $\fkq$ is standard and $ \mathcal{N}_R(M) =  \sum_{i=0}^{s}\binom{s}{i} r_i(M).$

			\item $ f_0(\fkq, M) = \displaystyle\sum_{j=1}^{s }\binom{s-1}{j-1} r_j(M)$ {\rm (see  \cite[Lemma 4.2]{Tru17})}.
			
			\item In the case $M =R$, 
			$$  e_i(\fkq:_R\fkm, R)= \begin{cases}
				(-1)^i \left( \displaystyle\sum_{j=1}^{d-i} \binom{d-i-1}{j-1} \ell_{R}(H_{\m}^{j}(R)) - \sum_{j=1}^{d-i+1} r_j(R) \right)  &\text{ if } i=1,\ldots,d-1,\\
				(-1)^d( \ell_{R}(H_{\m}^{0}(R)) - r_1(R))&\text{ if } i=d,
			\end{cases}$$
			{\rm (see \cite[Theorem 5.2]{CQT19}\label{thm5.2cqt19}).}
			
		\end{enumerate}
	\end{lem}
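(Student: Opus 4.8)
The plan is to prove part (i); parts (ii) and (iii) are quoted, respectively, from \cite[Lemma 4.2]{Tru17} and \cite[Theorem 5.2]{CQT19} as the statement records, so they require no further argument. For (i) there are two points: that a $C$-parameter ideal $\fkq$ of a generalized Cohen--Macaulay module is standard, and the value of $\mathcal{N}_R(M)=\ir_M(\fkq M)$.

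\emph{Standardness.} Write $\fkq=(x_1,\dots,x_s)$ with $x_1,\dots,x_s$ a $C$-system of parameters of $M$. It is a $d$-sequence; by Lemma \ref{lmc}(i) every power $x_1^{n_1},\dots,x_s^{n_s}$ is again a $C$-system, hence again a $d$-sequence; and by Lemma \ref{lmc}(ii) it induces a $C$-system, hence a $d$-sequence, on each $M/x_iM$. For a generalized Cohen--Macaulay module these ``unconditioned'' $d$-sequence properties force $\fkq\, H^i_\fkm\big(M/(x_1,\dots,x_j)M\big)=0$ for all $i+j<s$, i.e. $\fkq$ is standard; this can be verified by induction on $j$ after passing to the generalized Cohen--Macaulay module $M/x_1M$, or simply quoted from \cite{MoQ17,CuQ20,CuQ19}.

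\emph{The formula.} Recall $r_i(N)=\ell_R\big(\Soc H^i_\fkm(N)\big)$ with $\Soc(-)=(0):_{-}\fkm$. I induct on $s=\dim M$. If $s=0$ then $M=H^0_\fkm(M)$ has finite length, $\fkq=(0)$, and $\mathcal{N}_R(M)=\ir_M(0)=\ell_R\big((0):_M\fkm\big)=r_0(M)$, as claimed. Let $s\ge1$. By Lemma \ref{lmc}(iii), with $W=H^0_\fkm(M)$ we have $\mathcal{N}_R(M)=\mathcal{N}_R(M/W)+r_0(M)$; moreover $H^j_\fkm(W)=0$ for $j>0$, so from $0\to W\to M\to M/W\to0$ one gets $r_i(M/W)=r_i(M)$ for $i\ge1$ and $r_0(M/W)=0$. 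Since $M$ is generalized Cohen--Macaulay, $M/W$ is unmixed (its associated primes are those of $M$ other than $\fkm$, all of coheight $s$), so $M/W$ has positive depth and the parameter $x:=x_1$ of its $C$-system is $M/W$-regular. Hence it suffices to treat the case $\operatorname{depth}M>0$ with $x=x_1$ an $M$-regular element, where $r_0(M)=0$ and the target reads $\mathcal{N}_R(M)=\sum_{i=1}^{s}\binom{s}{i}r_i(M)$. Now $\mathcal{N}_R(M)=\mathcal{N}_R(M/xM)$ by Lemma \ref{lmc}(ii), with $x_2,\dots,x_s$ a $C$-system of the generalized Cohen--Macaulay module $M/xM$ of dimension $s-1$. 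As $\fkq$ is standard, $x$ annihilates $H^i_\fkm(M)$ for $i<s$, so the cohomology sequence of $0\to M\xrightarrow{\,x\,}M\to M/xM\to0$ splits into
\[0\to H^i_\fkm(M)\to H^i_\fkm(M/xM)\to\big(0:_{H^{i+1}_\fkm(M)}x\big)\to0\qquad(0\le i\le s-1),\]
where $\big(0:_{H^{i+1}_\fkm(M)}x\big)=H^{i+1}_\fkm(M)$ for $i\le s-2$. For a standard parameter ideal these sequences split as $R$-modules (see \cite{MoQ17,CQT15}), so $\Soc$ is additive on them; together with $\Soc\big(0:_{H^s_\fkm(M)}x\big)=\Soc H^s_\fkm(M)$ (valid because $x\in\fkm$), this yields $r_i(M/xM)=r_i(M)+r_{i+1}(M)$ for $0\le i\le s-1$. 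Substituting into $\mathcal{N}_R(M/xM)=\sum_{i=0}^{s-1}\binom{s-1}{i}r_i(M/xM)$ and simplifying by $\binom{s-1}{i}+\binom{s-1}{i-1}=\binom{s}{i}$ (recalling $r_0(M)=0$) produces $\sum_{i=1}^{s}\binom{s}{i}r_i(M)$, which closes the induction.

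\emph{Main obstacle.} The genuine content sits in the two structural inputs above: that a $C$-parameter ideal of a generalized Cohen--Macaulay module is standard, and that for a standard parameter ideal the local cohomology sequences of division by a regular parameter split (whence $r_i(M/xM)=r_i(M)+r_{i+1}(M)$). Both belong to the established theory of standard systems of parameters; alternatively, one may read off the formula for $\mathcal{N}_R(M)$ in one step from the known expression for the index of reducibility of a standard parameter ideal of a generalized Cohen--Macaulay module, leaving only the reductions of Lemma \ref{lmc} and elementary binomial identities.
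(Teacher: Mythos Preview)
The paper supplies no proof of this lemma: parts (ii) and (iii) carry explicit citations, and part (i) is stated as a preliminary fact without argument. The formula for $\mathcal N_R(M)$ in the generalized Cohen--Macaulay case is the classical formula for the index of reducibility of a standard parameter ideal, going back to \cite{GoS84} and revisited in \cite{CQT15,CuQ19}; that a $C$-system of parameters is standard on such a module follows because $C$-systems are $d$-sequences and, for generalized Cohen--Macaulay modules, $d$-sequence parameters are standard (cf.\ \cite{Tru86}). Your write-up therefore supplies strictly more than the paper does, and the route you take---reduce to positive depth via Lemma~\ref{lmc}(iii), divide by a regular standard parameter via Lemma~\ref{lmc}(ii), and feed $r_i(M/xM)=r_i(M)+r_{i+1}(M)$ into Pascal's rule---is exactly the standard inductive derivation of the index-of-reducibility formula.

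The one point that deserves care is your justification of $r_i(M/xM)=r_i(M)+r_{i+1}(M)$. You deduce it from an $R$-module splitting of the short exact sequence $0\to H^i_\fkm(M)\to H^i_\fkm(M/xM)\to H^{i+1}_\fkm(M)\to 0$. For Buchsbaum modules this is automatic (the lower local cohomologies are $k$-vector spaces), but for an arbitrary generalized Cohen--Macaulay module a full $R$-module splitting is not obvious, and the references you list do not quite say this. What is established in the literature, and is all you need, is that these sequences stay exact after applying $\Hom_R(R/\fkm,-)$, i.e., that the map $\Soc H^i_\fkm(M/xM)\to \Soc H^{i+1}_\fkm(M)$ is surjective; this socle identity is what yields $r_i(M/xM)=r_i(M)+r_{i+1}(M)$ and is proved, for standard parameters on generalized Cohen--Macaulay modules, in \cite{GoS84} and \cite{CQT15}. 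With that adjustment (or, as you yourself note at the end, by quoting the index-of-reducibility formula for standard parameter ideals in one step), your argument is complete.
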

	
	

	\section{Characterizations of Cohen-Macaulay rings}
	The purpose of this section is to present characterizations of  Cohen-Macaulay rings in terms of
	its Chern numbers, irreducible multiplicities, stable value and the Cohen-Macaulay type. As corollaries, we obtained the
	characterizations of a Gorenstein ring in terms of its Chern numbers, irreducible multiplicity and stable value. We begin with the following result.



	\begin{prop}\label{proe1c}
		Assume that $M$ is unmixed  of dimension $d \geq 2$. Suppose that 
		$$ \mathcal{N}(M) \leq f_0(\fkq, M),$$
		for some $C$-parameter ideal $\fkq$ of $M$. Then $M$ is Cohen-Macaulay. 
	\end{prop}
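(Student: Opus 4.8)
The plan is to argue by induction on $d=\dim M$, lowering the dimension by modding out one member of a $C$-system of parameters while keeping track of the two invariants $\mathcal N(M)$ and $f_0(\fkq,M)$ simultaneously. For the base case $d=2$: since $M$ is unmixed of positive dimension, $\fkm\notin\Ass M$, so $H^0_\fkm(M)=0$, and by Lemma~\ref{fact1}(i) the module $H^1_\fkm(M)$ is finitely generated, hence of finite length; thus $M$ is generalized Cohen--Macaulay. Now Lemma~\ref{fact2c}(i),(ii) apply, and with $r_0(M)=0$ and $s=2$ they read $\mathcal N(M)=2r_1(M)+r_2(M)$ and $f_0(\fkq,M)=r_1(M)+r_2(M)$; the hypothesis $\mathcal N(M)\le f_0(\fkq,M)$ forces $r_1(M)\le 0$, so $H^1_\fkm(M)=0$ and $M$ is Cohen--Macaulay.

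For the inductive step ($d\ge 3$), fix a $C$-system of parameters $x_1,\dots,x_d$ with $\fkq=(x_1,\dots,x_d)$ and $\mathcal N(M)\le f_0(\fkq,M)$, and set $\fkq'=(x_1,\dots,x_{d-1})$. Since $M$ is unmixed, $x_d$ is a nonzerodivisor on $M$; put $M'=M/x_dM$, $W=H^0_\fkm(M')$ and $\overline{M'}=M'/W$. By Lemma~\ref{lmc}(ii),(iii), $x_1,\dots,x_{d-1}$ is a $C$-system of parameters of both $M'$ and $\overline{M'}$ and
$$\mathcal N(M)=\mathcal N(M')=\mathcal N(\overline{M'})+r_0(M'),$$
while by Lemma~\ref{fact1}(ii) (applied to $x_d\in\fkb(M)^3$) the module $\overline{M'}$ is again unmixed, now of dimension $d-1$. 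On the side of the irreducible multiplicities I would invoke the $f_i$-analogues of Lemmas~\ref{lmsup} and~\ref{lm3.4}: reduction modulo the nonzerodivisor parameter $x_d$ gives $f_0(\fkq,M)\le f_0(\fkq',M')$, and since $\dim W=0<d-1$ with $d-1\ge 2$ the leading coefficient is unchanged upon factoring out the finite-length submodule $W$, so $f_0(\fkq',M')=f_0(\fkq',\overline{M'})$. Chaining these,
$$\mathcal N(\overline{M'})\ \le\ \mathcal N(M)\ \le\ f_0(\fkq,M)\ \le\ f_0(\fkq',\overline{M'}),$$
so the induction hypothesis applies to $\overline{M'}$ and shows it is Cohen--Macaulay. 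But then $\mathcal N(\overline{M'})=f_0(\fkq',\overline{M'})$ (both equal the Cohen--Macaulay type $r_{d-1}(\overline{M'})$, by Lemma~\ref{fact2c}), and substituting this into the displayed chain forces $r_0(M')=0$, i.e.\ $W=0$. Hence $M/x_dM=\overline{M'}$ is Cohen--Macaulay, so $\mathrm{depth}\,M=\mathrm{depth}\,(M/x_dM)+1=d$ and $M$ is Cohen--Macaulay.

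The step I expect to be the genuine obstacle is the comparison $f_0(\fkq,M)\le f_0(\fkq',M/x_dM)$ under reduction modulo $x_d$: unlike the additivity of $\mathcal N$ and the generalized Cohen--Macaulay formulas, this is not among the stated preliminaries and has to be extracted from the theory of the polynomial $\mathrm{ir}_M(\fkq^{n+1})$ (cf.~\cite{CQT15}, \cite{Tru17}, \cite{CuQ19}). The point is that a $C$-parameter element is superficial for $\fkq$, so that $\fkq^{n+1}M:_M x_d=\fkq^nM$ for $n\gg 0$; granting this, the short exact sequences (valid for $n\gg 0$)
$$0\longrightarrow M/\fkq^nM\ \xrightarrow{\,x_d\,}\ M/\fkq^{n+1}M\longrightarrow (M/x_dM)\big/(\fkq')^{n+1}(M/x_dM)\longrightarrow 0$$
together with the left-exactness of $\Hom_R(k,-)$ give $\mathrm{ir}_M(\fkq^{n+1})\le \mathrm{ir}_M(\fkq^n)+\mathrm{ir}_{M/x_dM}((\fkq')^{n+1})$, and comparing the leading terms (in $n$) of the two sides yields $f_0(\fkq,M)\le f_0(\fkq',M/x_dM)$.
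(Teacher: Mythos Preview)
Your argument is essentially the paper's: the same base case via Lemma~\ref{fact2c}, and the same inductive descent through $M/xM$ and its unmixed part, with the $f_0$-inequalities you single out as the obstacle being precisely what the paper quotes from \cite{TTr20}, Lemmas~2.1--2.2 (so no new work is needed there).

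Two points. First, you reduce modulo $x_d$, whereas the paper reduces modulo $x_1$; this matters because Lemma~\ref{fact1}(ii) and the superficiality used in your short exact sequence are stated for $x_1$ (the $C$-system condition and the $d$-sequence property are not symmetric in the indices), so you should switch to $x_1$ to stay within the available lemmas. Second, your endgame is different and in fact cleaner: once $\overline{M'}$ is Cohen--Macaulay you observe that both ends of the chain collapse to $r_{d-1}(\overline{M'})$, forcing all inequalities to be equalities and hence $r_0(M')=0$, so $M/x_1M$ itself is Cohen--Macaulay and you are done by depth counting. The paper does not notice this; it only extracts $H^i_\fkm(M/x_1M)=0$ for $1\le i\le d-2$ and then finishes via the long exact sequence in local cohomology together with Nakayama on the finitely generated $H^1_\fkm(M)$.
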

	\begin{proof}
		We use induction on the dimension $s$ of $M$. In the case $\dim M = 2$, then $M$ is  generalized Cohen-Macaulay because of Lemma \ref{fact1}. We get by the Lemma \ref{fact2c} that 
		$$r_1(M) + r_2(M) =  f_0(\fkq, M) \geq \mathcal{N}(M) = r_2(M) + 2 r_1(M).$$ 
		Thus, $r_1(M) = 0$ and therefore  $M$ is Cohen-Macaulay.

		Suppose that $\dim M \geq 3$ and that our assertion holds true for $\dim M - 1$. 
		Since $M$ is unmixed, $x_1$ is $M$-regular whence  $H_{\fkm}^{0}(M)  =  (0) :_M x_1= (0)$. Let $A = M/x_1M$ and $\overline{\fkq} = (x_2, \ldots, x_s)$. 
		Then by Lemma \ref{fact1} and Lemma \ref{lmc} ii), we have  $\Ass A \subseteq \Assh A \cup \{ \fkm\}$ and $x_2, \ldots, x_d$ is a $C$-system of parameters of $A$. Thus $B := A/H_{\fkm}^{0}(A)$ is unmixed and  $x_2, \ldots, x_d$ is  also a $C$-system of parameters of $B$. 
		It follows from Lemma \ref{lmc} that we have
		$$  \mathcal{N}(B)\le \mathcal{N}(A) =\mathcal{N}(M).$$
		On the other hand, we have
		$$ f_0(\fkq, M) \leq  f_0(\overline{\fkq}, A) \leq f_0(\overline{\fkq}, B),$$
		because of Lemma 2.1 and 2.2 in \cite{TTr 20}. Consequently,  we get that $ \mathcal{N}(B) \leq f_0(\overline{\fkq}, B)$. 
		By the hypothesis of induction, $B$ is Cohen-Macaulay. Thus $H^{i}_{\m}(A) = 0$ for all $1 \leq i < d$.

		Now it follows from the  following sequence 
		$$0 \to M \xrightarrow{x_1} M \to A \to 0$$
		that we have the long exact sequence
		\begin{eqnarray*}
			\ldots \to H^{1}_{\m}(M) \xrightarrow{x_1} &H^{1}_{\m}(M)& \to H^{1}_{\m}(A)\to 0 \ldots \\
			\ldots \to H^{i}_{\m}(M) \xrightarrow{x_1} &H^{i}_{\m}(M)& \to H^{i}_{\m}(A)\to 0 \ldots 
		\end{eqnarray*} 
		Then we have $H^{i}_{\m}(M) = 0$ for all $2 \leq i \leq d-1$ and $H^{1}_{\m}(M) = x_1H^{1}_{\m}(M)$. Thus $H^{1}_{\m}(M) = 0$
		because $H^{1}_{\m}(M)$ is a finite generated $R$-module. Hence $M$ is Cohen-Macaulay, as required.
	\end{proof}
	
	
	For each integer $n\geq 1$, we denote by ${\underline x}^n$ the sequence $x^n_1, x^n_2,\ldots,x^n_d$. Let $K^{\bullet}(x^n)$ be the
	Koszul complex of $R$ generated by the sequence ${\underline x}^n$ and let
	$H^{\bullet}({\underline x}^n,R) = H^{\bullet}(\Hom_R(K^{\bullet}({\underline x}^n),R))$
	be the Koszul cohomology module of $R$. Then for every $p\in\Bbb Z$, the family $\{H^p({\underline x}^n,R)\}_{n\ge 1}$
	naturally forms an inductive system of $R$, whose limit
	$$H^p_\fkq(R)=\lim\limits_{n\to\infty} H^p({\underline x}^n,R)$$
	is isomorphic to the local cohomology module
	$$H^p_\fkm(R)=\lim\limits_{n\to\infty} {\rm Ext}_R^p(R/\fkm^n,R).$$
	For each $n\geq 1$ and $p \in\mathbb{Z}$, let $\phi^{p,n}_{{\underline x},R}:H^p({\underline x}^n,R)\to H^p_\fkm(R)$ denote the canonical
	homomorphism into the limit. 
	\begin{defn}[{\cite[Lemma 3.12]{GoS03}}]\label{sur}
		{\rm There exists an integer $n_0$ 
			such that for all systems of parameters ${\underline x}=x_1,\ldots,x_d$  for $R$ contained in $\fkm^{n_0}$ and for all $p\in \Bbb Z$, the canonical homomorphisms
			$$\phi^{p,1}_{{\underline x},R}:H^p({\underline x},R)\to H^p_\fkm(R)$$
			into the inductive limit are surjective on the socles.
			The least integer $n_0$ with this property is called a \textit{g-invariant} of $R$ and denote by $\rmg(R)$.}
	\end{defn}

	Now let $\bigcap_{\fkp \in \Ass M} N_{\fkp} = 0 $ be a reduced primary decomposition of the zero submodule of $M$. Then the submodule
	$ U_M(0) =  \bigcap_{\fkp \in \Ass M, \dim R/\fkp = s} N_{\fkp}$,
	is called the unmixed component of $M$ and denoted by $U_M(0)$. We denote $\fku$ the unmixed component of $R$. 

		
	\begin{defn}	[{\cite{Sch99}}]
		\begin{enumerate}[$i)$]
			\item  A filtration 
			$\calD : D_0=(0)\subseteq D_1\subsetneq D_2\subsetneq \cdots \subsetneq D_{t}=M$ of submodule of $M$ is said to be a \textit{dimension filtration}, if $D_i$ is the largest submodule of $D_{i+1}$ with $\dim D_i < \dim D_{i+1}$ for all $i=1, \ldots, t-1$. 
			\item  A  system $\underline{x} =x_1,x_2, \ldots, x_s$  of parameters of $M$ is called 
			{\it distinguished}, if  
			$  (x_j \mid d_{i} < j \le s) D_i=(0)$
			for all $1\le i\le t$. A parameter ideal $\fkq$ of $M$ is called  
			{\it distinguished},  if  it is generated by a distinguished system of parameters of $M$. Note that every $C$-system of parameter is distinguished, (see \cite[Proposition 4.8 and Remark 4.10]{CuQ17}).
			
		\end{enumerate}	
		
	\end{defn}	
	\begin{lem} [{\cite[Proposition 3.11]{OTY20} \label{prof0c}}]
		Suppose that 
		$$e_1(\fkq : \m, R) - e_1(\fkq, R) \leq r_d(R)$$
		for some $C$-parameter ideal $\fkq\subseteq \fkm^{\rmg(R)}$ of $R$.  Then $R/\fku$ is Cohen-Macaulay. 
	\end{lem}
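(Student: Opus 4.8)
The plan is to prove the statement by induction on $d=\dim R$, after first reducing to the case where $R$ is unmixed. Set $I=\fkq:_R\fkm$. Since $\dim\fku<d$, Lemma~\ref{lm3.4} applied with $N=\fku$ compares Hilbert coefficients over $R$ and over $R/\fku$: either $e_1(J,R)=e_1(J,R/\fku)$ for every $\fkm$-primary $J$ (when $\dim\fku\le d-2$), or $e_1(J,R)=e_1(J,R/\fku)-e_0(J,\fku)$ (when $\dim\fku=d-1$). Applying this to $J=I$ and $J=\fkq$, subtracting, and using $e_0(\fkq,\fku)\ge e_0(I,\fku)$ (because $\fkq\subseteq I$), one gets
\[
e_1(I,R/\fku)-e_1(\fkq,R/\fku)\ \le\ e_1(I,R)-e_1(\fkq,R)\ \le\ r_d(R)=r_d(R/\fku),
\]
the last equality because $H^d_\fkm(\fku)=0$. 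Since the image of $\fkq$ in $R/\fku$ is still a $C$-parameter ideal lying in $\fkm^{\rmg(R/\fku)}$ (a routine verification in the spirit of Lemma~\ref{lmc}), it suffices to treat the unmixed case, in which $\fku=0$ and the assertion is that $R$ is Cohen-Macaulay.

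For the base of the induction, $d=1$ is trivial since an unmixed one-dimensional local ring is Cohen-Macaulay. For $d=2$, Lemma~\ref{fact1}(i) shows that $H^1_\fkm(R)$ is finitely generated, hence of finite length, so $R$ is generalized Cohen-Macaulay, and $H^0_\fkm(R)=0$ by unmixedness. Now $\fkq$ is standard by Lemma~\ref{fact2c}(i), so the formula of Lemma~\ref{fact2c}(iii) (case $i=1$), together with the classical value $e_1(\fkq,R)=-\ell_R(H^1_\fkm(R))$ of the Chern number of a standard parameter ideal of a two-dimensional generalized Cohen-Macaulay ring, gives $e_1(I,R)-e_1(\fkq,R)=r_1(R)+r_2(R)$. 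The hypothesis forces $r_1(R)=0$, i.e.\ $H^1_\fkm(R)=0$, i.e.\ $R$ is Cohen-Macaulay.

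For the inductive step, let $d\ge3$, $R$ unmixed, and write $\fkq=(x_1,\dots,x_d)$ with $x_1,\dots,x_d$ a $C$-system of parameters. Then $x_1$ is a nonzerodivisor; put $\overline{R}=R/x_1R$ and $\overline{\fkq}=(x_2,\dots,x_d)\overline{R}$. By Lemma~\ref{lmc}(ii), $x_2,\dots,x_d$ is a $C$-system of parameters of $\overline{R}$, and by Lemma~\ref{fact1}(ii), $\Ass\overline{R}\subseteq\Assh\overline{R}\cup\{\fkm\}$, so $\overline{B}:=\overline{R}/H^0_\fkm(\overline{R})$ is unmixed of dimension $d-1$ with $\overline{\fkq}$ a $C$-parameter ideal of it. The key point in transporting the hypothesis is the equality
\[
(\fkq:_R\fkm)\,\overline{R}\ =\ \overline{\fkq}:_{\overline{R}}\fkm,
\]
valid because $x_1\in\fkq$ forces $a\fkm\subseteq\fkq+x_1R=\fkq$ whenever $\overline{a}\fkm\subseteq\overline{\fkq}$. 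Granting that $x_1$ is superficial both for $\fkq$ and for $I$, Lemma~\ref{lmsup} gives $e_1(\fkq,\overline{R})=e_1(\fkq,R)$ and $e_1(I\overline{R},\overline{R})=e_1(I,R)$ (the correction term there occurs only in degree $d-1\ge2$, and would vanish anyway since $0:_Rx_1=0$); combining this with Lemma~\ref{lm3.4} for the finite-length submodule $H^0_\fkm(\overline{R})$, and with $r_{d-1}(\overline{B})=r_d(R)$ — both being, by local duality, the minimal number of generators of the canonical module, which is unchanged by cutting with the generic parameter $x_1$ and by killing a finite-length submodule — one obtains
\[
e_1(\overline{\fkq}:_{\overline{B}}\fkm,\overline{B})-e_1(\overline{\fkq},\overline{B})\ =\ e_1(I,R)-e_1(\fkq,R)\ \le\ r_d(R)\ =\ r_{d-1}(\overline{B}).
\]
Thus $\overline{B}$ satisfies the hypothesis in dimension $d-1$, so by induction $\overline{B}$ is Cohen-Macaulay. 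Hence $H^i_\fkm(\overline{R})\cong H^i_\fkm(\overline{B})=0$ for $1\le i\le d-2$ and $H^0_\fkm(\overline{R})$ has finite length; feeding this into the long exact sequence of $0\to R\xrightarrow{x_1}R\to\overline{R}\to0$, multiplication by $x_1$ is injective on $H^i_\fkm(R)$ for $2\le i\le d-1$, which forces $H^i_\fkm(R)=0$ (an $\fkm$-torsion module on which some element of $\fkm$ acts injectively must vanish), while $H^1_\fkm(R)=x_1H^1_\fkm(R)$ together with the finite generation of $H^1_\fkm(R)$ (Lemma~\ref{fact1}(i)) and Nakayama gives $H^1_\fkm(R)=0$; and $H^0_\fkm(R)=0$ by unmixedness. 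Therefore $R$ is Cohen-Macaulay, completing the induction.

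The main obstacle is the clause ``granting that $x_1$ is superficial for $I=\fkq:\fkm$'': since $\fkq:\fkm$ is not a parameter ideal, Lemma~\ref{lmsup} does not apply directly, and one must show that a prescribed generator $x_1$ of a $C$-system of parameters is superficial for $\fkq:\fkm$. This rests on the facts that $\fkq$ is a (minimal) reduction of $\fkq:\fkm$ and that the generators of a $C$-system of parameters may be arranged to form a superficial sequence for $\fkq:\fkm$; establishing this is the technical heart of the proof (it runs parallel to the treatment of irreducible multiplicities behind Proposition~\ref{proe1c}). Once it is in hand, the identity $(\fkq:\fkm)\overline{R}=\overline{\fkq}:\fkm$ and the stability $r_{d-1}(\overline{B})=r_d(R)$ are exactly what make the induction close, and the $g$-invariant bookkeeping in the reductions is straightforward.
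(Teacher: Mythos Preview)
The paper does not prove this lemma; it is quoted verbatim from \cite[Proposition~3.11]{OTY20}. So there is no ``paper's own proof'' to compare against, and your attempt must stand on its own. Unfortunately it has two structural gaps that you have not closed.

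\textbf{The colon ideal is not stable under your reductions.} When you pass from $R$ to $S=R/\fku$ (and later from $\overline{R}$ to $\overline{B}=\overline{R}/H^0_\fkm(\overline{R})$), you tacitly replace $I=\fkq:_R\fkm$ by its image and continue as if that image equals $\fkq S:_S\fkm$. In general one only has the inclusion $(\fkq:_R\fkm)S\subseteq \fkq S:_S\fkm$, and since both ideals share the reduction $\fkq S$ one gets $e_1(\fkq S:_S\fkm,S)\ge e_1((\fkq:_R\fkm)S,S)$. This is the wrong direction: from $e_1(I,S)-e_1(\fkq,S)\le r_d(S)$ you cannot conclude $e_1(\fkq S:_S\fkm,S)-e_1(\fkq,S)\le r_d(S)$, which is what the inductive hypothesis needs. (Your verification that $(\fkq:_R\fkm)\overline{R}=\overline{\fkq}:_{\overline{R}}\fkm$ for $\overline{R}=R/x_1R$ is correct precisely because $x_1\in\fkq$; no such containment is available for $\fku$ or for $H^0_\fkm(\overline{R})$.) The paper handles the analogous issue elsewhere (e.g.\ in Lemma~\ref{lm2}) by invoking \cite[Lemma~2.4]{CuT08}, but that result is \emph{asymptotic}: it gives the desired equality only for parameter ideals deep enough in $\fkm$, so you would have to change $\fkq$ --- and then the hypothesis on your original $\fkq$ is lost.

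\textbf{The $g$-invariant and superficiality issues are not bookkeeping.} The condition $\fkq\subseteq\fkm^{\rmg(R)}$ is what makes the inequality $e_1(\fkq:\fkm,R)-e_1(\fkq,R)\ge r_d(R)$ hold (cf.\ Corollary~\ref{cor2} and \cite[Corollary~4.4]{OTY20}); it is exactly this lower bound, matched with your upper bound, that pins the difference down. There is no reason $\rmg(\overline{B})\le\rmg(R)$, so the induced parameter ideal on $\overline{B}$ need not lie in $\fkm^{\rmg(\overline{B})}$, and the inductive hypothesis cannot be invoked. Likewise, you correctly flag that $x_1$ being superficial for $I=\fkq:\fkm$ is the ``technical heart,'' but you do not supply it; the $d$-sequence property of a $C$-system gives superficiality for $\fkq$, not for $\fkq:\fkm$, and the argument you sketch (arranging the $C$-system to be a superficial sequence for $\fkq:\fkm$) would again force you to alter $\fkq$. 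In short, every reduction step in your induction either changes the colon ideal or changes the parameter ideal, and in each case you lose control of the single inequality you started with. The actual proof in \cite{OTY20} proceeds differently and avoids this trap.
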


	\begin{lem}\label{5.1}
		Assume that $\dim R \geq 2$. Then  we have
		$$e_1(\fkq:_R\fkm,R)-e_1(\fkq,R)\le f_0(R),$$
		for all $C$-parameter ideals $\fkq$.
	\end{lem}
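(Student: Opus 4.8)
The plan is to argue by induction on $d=\dim R$, at each stage reducing to a ring whose Hilbert and index-of-reducibility coefficients are controlled by the explicit formulas of Lemma~\ref{fact2c}.

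\emph{The inductive step ($d\ge 3$).} Fix a $C$-system of parameters $x_1,\dots,x_d$ generating $\fkq$. Since $\fkq\subseteq\fkq:_R\fkm$ and $\fkm(\fkq:_R\fkm)\subseteq\fkq$, one may arrange (replacing the $x_i$ by suitable powers, which keeps $\fkq$ a $C$-parameter ideal by Lemma~\ref{lmc}(i) and changes none of the quantities involved) that $x:=x_d$ is superficial for both $\fkq$ and $\fkq:_R\fkm$. As $x\in\fkq$ we get $(\fkq:_R\fkm)/xR=\overline{\fkq}:_{R/xR}\fkm$, and Lemma~\ref{lmsup} with $j=1\le d-2$ gives $e_1(\fkq,R)=e_1(\overline{\fkq},R/xR)$ and $e_1(\fkq:_R\fkm,R)=e_1(\overline{\fkq}:_{R/xR}\fkm,R/xR)$. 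Together with the analogous stability of the leading coefficient of $n\mapsto\mathrm{ir}_R(\fkq^{n+1})$ under a superficial element, i.e. $f_0(\fkq,R)=f_0(\overline{\fkq},R/xR)$, and with the fact that $\overline{\fkq}$ is again a $C$-parameter ideal of $R/xR$ (Lemma~\ref{lmc}(ii)), the inequality for $R$ follows from the one for $R/xR$, which has dimension $d-1\ge 2$.

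\emph{The base case ($d=2$).} Passing to $R/H^0_\fkm(R)$ changes neither $e_1(\fkq,R)$ nor $e_1(\fkq:_R\fkm,R)$ (Lemma~\ref{lm3.4}, since $\dim H^0_\fkm(R)=0$) and shifts the index of reducibility only by the constant $r_0(R)$, as in Lemma~\ref{lmc}(iii); so we may assume $\mathrm{depth}\,R\ge 1$. If $R$ is unmixed, then $H^1_\fkm(R)$ is finitely generated by Lemma~\ref{fact1}(i), hence $R$ is generalized Cohen--Macaulay and $\fkq$ is standard by Lemma~\ref{fact2c}(i); combining Lemma~\ref{fact2c}(ii),(iii) with the classical identity $e_1(\fkq,R)=-\sum_{j=1}^{d-1}\binom{d-2}{j-1}h_j(R)$ for standard $\fkq$ yields
\[
e_1(\fkq:_R\fkm,R)-e_1(\fkq,R)=\sum_{j=1}^{d}r_j(R)\ \le\ \sum_{j=1}^{d}\binom{d-1}{j-1}r_j(R)=f_0(\fkq,R),
\]
which is the assertion (in fact an equality when $d=2$). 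If $R$ is not unmixed, let $\fku$ be its unmixed component; then $\dim\fku=1$, $\fku$ is Cohen--Macaulay (it has positive depth), and $R/\fku$ is unmixed of dimension $2$, hence generalized Cohen--Macaulay. By Lemma~\ref{lm3.4} with $N=\fku$ (so $s-t=1$), $e_1(I,R)=e_1(I,R/\fku)-e_0(I,\fku)$ for $I=\fkq$ and $I=\fkq:_R\fkm$, so
\[
e_1(\fkq:_R\fkm,R)-e_1(\fkq,R)=\bigl(e_1(\fkq:_R\fkm,R/\fku)-e_1(\fkq,R/\fku)\bigr)-\bigl(e_0(\fkq:_R\fkm,\fku)-e_0(\fkq,\fku)\bigr).
\]
A $C$-parameter ideal is distinguished with respect to the dimension filtration, so $x_2\fku=0$; this makes the submodules $\fkq\fku$ and $(\fkq:_R\fkm)\fku$ of $\fku$ have the same integral closure, whence $e_0(\fkq:_R\fkm,\fku)=e_0(\fkq,\fku)$ and the right-hand side collapses to $e_1(\fkq:_R\fkm,R/\fku)-e_1(\fkq,R/\fku)=f_0(\fkq,R/\fku)$ by the generalized Cohen--Macaulay case above. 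One then checks $f_0(\fkq,R/\fku)=f_0(\fkq,R)$ by analysing socles along $0\to\fku\to R\to R/\fku\to 0$ (the inequality $f_0(\fkq,R)\le f_0(\fkq,R/\fku)$ is immediate from left-exactness of $\mathrm{Hom}_R(k,-)$, and the reverse uses once more that $\fkq$ is a $C$-parameter ideal).

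\emph{Main obstacle.} The delicate case is the non-unmixed one: one must show that the $1$-dimensional piece $\fku$ contributes nothing to the leading coefficient of the index of reducibility while contributing the same amount to $e_0(\fkq,\fku)$ as to $e_0(\fkq:_R\fkm,\fku)$. Both facts rest on distinguishedness and the $d$-sequence property of a $C$-parameter ideal (so that $x_2\fku=0$ and the relevant Artin--Rees numbers vanish), not merely on $\fkq$ being a parameter ideal; this is precisely what forces the stated inequality rather than its reverse, and it is where Lemma~\ref{prof0c} and the machinery behind it come into play.
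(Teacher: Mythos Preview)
Your approach is substantially more elaborate than the paper's, and the inductive step contains a genuine gap.

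The paper's argument is essentially one line. Writing $I=\fkq:_R\fkm$ and assuming $e_0(\fkm,R)>1$, it invokes \cite[Proposition~2.3]{GoS03} (valid because $d\ge 2$) to obtain $\fkm I^n=\fkm\fkq^n$ for all $n$, hence $I^{n+1}\subseteq\fkq^{n+1}:\fkm$. This gives
\[
\ell_R(R/\fkq^{n+1})-\ell_R(R/I^{n+1})=\ell_R(I^{n+1}/\fkq^{n+1})\le\ell_R\bigl((\fkq^{n+1}:\fkm)/\fkq^{n+1}\bigr)=\mathrm{ir}_R(\fkq^{n+1}),
\]
and comparing the coefficients of $\binom{n+d-1}{d-1}$ on the two sides yields $e_1(I,R)-e_1(\fkq,R)\le f_0(\fkq,R)$ immediately. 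No induction, no reduction modulo a parameter, no unmixedness analysis is needed; the remaining case $e_0(\fkm,R)=1$ is disposed of separately.

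Your inductive step does not work as written. You say that ``replacing the $x_i$ by suitable powers \dots\ changes none of the quantities involved'' and lets you assume $x_d$ is superficial for both $\fkq$ and $\fkq:_R\fkm$. But replacing the generators by powers produces a \emph{different} parameter ideal $\fkq'$, with different Hilbert coefficients and a different $f_0$; establishing the inequality for $\fkq'$ says nothing about the given $\fkq$. What is actually true is that a $C$-system is a $d$-sequence, so $x_1$ is automatically superficial for $\fkq$; but you give no argument that any $x_i$ is superficial for $I=\fkq:_R\fkm$, and the claimed stability $f_0(\fkq,R)=f_0(\overline{\fkq},R/xR)$ is asserted without proof. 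In the base case, the non-unmixed branch likewise leaves the crucial equality $f_0(\fkq,R/\fku)=f_0(\fkq,R)$ essentially unargued (left-exactness of $\Hom_R(k,-)$ gives only one inequality, and your appeal to ``the machinery behind'' Lemma~\ref{prof0c} is not a proof). The paper's direct route via $I^{n+1}\subseteq\fkq^{n+1}:\fkm$ bypasses all of these difficulties.
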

	\begin{proof} Let $I=\fkq:_R\fkm$. 
		In the case that	 $e_0(\fkm,R)=1$. It follows from  $R$ is unmixed and Theorem 40.6 in \cite{Nag62} that $R$ is Cohen-Macaulay. We get by Lemma \ref{fact2c}  that 
		$$e_1(I,R)-e_1(\fkq,R) = f_0(\fkq,R).$$
		
		Now suppose that $e_0(\m,R) > 1$.   Since $\dim R \ge 2$, by Proposition 2.3 in \cite{GoS03}, we have $\m I^n = \m\fkq^n$ for all $n$. Therefore $I^n\subseteq \fkq^n:\m$ for all $n$. Consequence, we obtain
		$$\ell(R/\fkq^{n+1})-\ell(R/I^{n+1})=\ell(I^{n+1}/\fkq^{n+1})\le \ell((\fkq^{n+1}:\m/\fkq^{n+1}).$$ 
		Hence we have $$e_1(I,R)-e_1(\fkq,R)\le f_0(\fkq,R),$$
		as required. 
	\end{proof}

In \cite{Tru14} and \cite{Tru17}, the first author provided the characterizations of  Cohen-Macaulay rings in terms of
its Chern numbers, irreducible multiplicities, index of reducibility of a parameter of $R$, and the Cohen-Macaulay type, provided $R$ is unmixed. Notice that  the necessary and sufficient conditions of these characterizations need to hold true for all parameter ideals of $R$. Recently,  N. T. T. Tam and the first author in \cite{TTr20} gave the characterizations of  Cohen-Macaulay rings in terms of
its Chern numbers, irreducible multiplicities and the type, 
which was introduced  by S. Goto and N. Suzuki (\cite{GoS84}).	Let us now state the main results of this section and its corollaries.
	\begin{thm}\label{proe2c}
		Assume that $R$ is unmixed of dimension $d \geq 2$. Then the following statements are equivalent.
		\begin{enumerate}[$(1)$] 
			\item   $R$ is Cohen-Macaulay.
			\item For some $C$-parameter ideal $\fkq$ of $R$, we have
			$$ \mathcal{N}(R) \leq e_1(\fkq:\m) - e_1(\fkq).$$
			\item For some $C$-parameter ideal $\fkq$ of $R$ , we have
			$$ \mathcal{N}(R) \leq f_0(\fkq, R).$$
			\item For some $C$-parameter ideal $\fkq\subset \m^{\rmg(R)}$ of $R$, we have 
			$$ f_0(\fkq)  \leq r_d(R).$$
			\item For some $C$-parameter ideal $\fkq\subset\m^{\rmg(R)} $ of $R$, we have 
			$$  e_1(\fkq:\m) - e_1(\fkq)\leq r_d(R).$$
		\end{enumerate}
	\end{thm}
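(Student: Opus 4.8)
The plan is to establish the two cycles of implications $(1)\Rightarrow(2)\Rightarrow(3)\Rightarrow(1)$ and $(1)\Rightarrow(4)\Rightarrow(5)\Rightarrow(1)$; since statement $(1)$ occurs in both cycles, this yields the equivalence of all five statements.

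For the implications out of $(1)$, suppose $R$ is Cohen-Macaulay. Then $R$ is in particular generalized Cohen-Macaulay with $H^i_{\m}(R)=0$, hence $r_i(R)=0$, for all $i<d$. Starting from any $C$-system of parameters of $R$, Lemma \ref{lmc}$(i)$ shows that its $n$-th powers generate a $C$-parameter ideal $\fkq\subseteq\m^{\rmg(R)}$ for $n\gg0$. Feeding the vanishings $r_i(R)=0$ $(i<d)$ into Lemma \ref{fact2c} one computes $\mathcal{N}(R)=r_d(R)$ (part $(i)$), $f_0(\fkq,R)=r_d(R)$ (part $(ii)$), and $e_1(\fkq:\m,R)-e_1(\fkq,R)=r_d(R)$ (part $(iii)$ with $i=1$, together with $e_1(\fkq,R)=0$, since the Hilbert function of a parameter ideal in a Cohen-Macaulay ring is a single binomial coefficient). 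Hence all of $(2)$, $(3)$, $(4)$, $(5)$ hold for this $\fkq$ — with equality in $(2)$ and $(3)$ — which gives $(1)\Rightarrow(2)$ and $(1)\Rightarrow(4)$ (and in fact $(1)\Rightarrow(3)$, $(1)\Rightarrow(5)$ as well).

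The implications $(2)\Rightarrow(3)$ and $(4)\Rightarrow(5)$ are then immediate from Lemma \ref{5.1}: for the $C$-parameter ideal $\fkq$ supplied by $(2)$ one has $e_1(\fkq:\m,R)-e_1(\fkq,R)\le f_0(\fkq,R)$, so $\mathcal{N}(R)\le e_1(\fkq:\m,R)-e_1(\fkq,R)$ forces $\mathcal{N}(R)\le f_0(\fkq,R)$; and for the $\fkq\subseteq\m^{\rmg(R)}$ supplied by $(4)$ one gets $e_1(\fkq:\m,R)-e_1(\fkq,R)\le f_0(\fkq,R)\le r_d(R)$, the containment $\fkq\subseteq\m^{\rmg(R)}$ being unchanged. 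The implication $(3)\Rightarrow(1)$ is nothing but Proposition \ref{proe1c} applied to $M=R$. Finally, $(5)\Rightarrow(1)$: Lemma \ref{prof0c} applied to the $C$-parameter ideal in $(5)$ shows $R/\fku$ is Cohen-Macaulay, and because $R$ is unmixed every associated prime of $R$ has dimension $d$, so the unmixed component $\fku$ equals the zero ideal; therefore $R=R/\fku$ is Cohen-Macaulay. This closes both cycles.

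The only substantial ingredient is $(3)\Rightarrow(1)$, and this has already been carried out in Proposition \ref{proe1c}; the remainder is an assembly of Lemmas \ref{lmc}, \ref{fact2c}, \ref{5.1} and \ref{prof0c}. Accordingly, I expect the step requiring the most care to be the direction out of $(1)$ — checking, via the formulae of Lemma \ref{fact2c}, that in the Cohen-Macaulay case $\mathcal{N}(R)$, $f_0(\fkq,R)$ and $e_1(\fkq:\m,R)-e_1(\fkq,R)$ all collapse to $r_d(R)$, and arranging for the chosen $C$-parameter ideal to lie inside $\m^{\rmg(R)}$ so that $(4)$ and $(5)$ are literally satisfied.
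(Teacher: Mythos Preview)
Your proof is correct and follows essentially the same route as the paper: the same two cycles $(1)\Rightarrow(2)\Rightarrow(3)\Rightarrow(1)$ and $(1)\Rightarrow(4)\Rightarrow(5)\Rightarrow(1)$, with $(1)\Rightarrow(2),(4)$ via Lemma~\ref{fact2c}, $(2)\Rightarrow(3)$ and $(4)\Rightarrow(5)$ via Lemma~\ref{5.1}, and the returns to $(1)$ via Proposition~\ref{proe1c} and Lemma~\ref{prof0c}. You spell out a few details the paper leaves implicit (arranging $\fkq\subseteq\m^{\rmg(R)}$ via Lemma~\ref{lmc}(i), and noting $\fku=0$ under the unmixed hypothesis), but the argument is the same.
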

	\begin{proof}
		$(1) \Rightarrow (2)$ and $(1) \Rightarrow (4) $ follow from Lemma \ref{fact2c}.\\
		$(2) \Rightarrow (3)$ and $(4) \Rightarrow (5) $ are trivial.\\
		$(3) \Rightarrow (1) $ and $ (5) \Rightarrow (1)$ are   immediate from Proposition  \ref{proe1c} and Lemma \ref{prof0c}.
	\end{proof}

Let 
$$\Xi_{i}(M) = \{e_i(\fkq:_R\fkm,M) \mid \fkq \text{ is a }C\text{-parameter ideal of }  M \}, $$
for all $i = 1, \ldots, s$. Then we have the following results.


	\begin{cor} \label{cor7}
		Assume that $R$ is unmixed of dimension $d \geq 2$. Then $$\Xi_{1}(R) \subseteq (- \infty, \mathcal{N}(R)].$$ Moreover, $\mathcal{N}(R) \in \Xi_{1}(R)$ if and only if $R$ is 	Cohen-Macaulay. 
	\end{cor}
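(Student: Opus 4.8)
The plan is to deduce the statement from Theorem~\ref{proe2c} and from the standard bound $e_1(\fkq,R)\le 0$, valid for every parameter ideal of a Noetherian local ring (this is the inclusion $\Lambda_1(R)\subseteq(-\infty,0]$ recorded around Table~\ref{table1}, see \cite{SMV10,CGT13}). I would split the argument according to whether or not $R$ is Cohen-Macaulay, fixing throughout an arbitrary $C$-parameter ideal $\fkq$ of $R$ (one exists because $R$ is a homomorphic image of a Cohen-Macaulay local ring).

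Suppose first that $R$ is Cohen-Macaulay. Then $R$ is generalized Cohen-Macaulay with $H^{j}_{\fkm}(R)=0$ and $r_j(R)=0$ for all $j<d$, and $\mathcal{N}(R)=r_d(R)$. Applying the formula of Lemma~\ref{fact2c}(iii) with $i=1$ (admissible since $1\le d-1$), all the cohomology terms drop out and we are left with
$$e_1(\fkq:_R\fkm,R)=r_d(R)=\mathcal{N}(R).$$
Since this value does not depend on $\fkq$, we get $\Xi_1(R)=\{\mathcal{N}(R)\}$; in particular $\Xi_1(R)\subseteq(-\infty,\mathcal{N}(R)]$ and $\mathcal{N}(R)\in\Xi_1(R)$.

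Suppose next that $R$ is not Cohen-Macaulay. Then statement $(2)$ of Theorem~\ref{proe2c} fails; by the form of $(2)$ this means $\mathcal{N}(R)>e_1(\fkq:_R\fkm,R)-e_1(\fkq,R)$ for \emph{every} $C$-parameter ideal, in particular for our chosen $\fkq$. Combining this with $e_1(\fkq,R)\le 0$ yields
$$e_1(\fkq:_R\fkm,R)\le e_1(\fkq:_R\fkm,R)-e_1(\fkq,R)<\mathcal{N}(R),$$
so $\Xi_1(R)\subseteq(-\infty,\mathcal{N}(R))$ and, in particular, $\mathcal{N}(R)\notin\Xi_1(R)$. (An equivalent route passes instead through Lemma~\ref{5.1} and statement $(3)$, via $e_1(\fkq:_R\fkm,R)-e_1(\fkq,R)\le f_0(\fkq,R)<\mathcal{N}(R)$.)

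Combining the two cases gives $\Xi_1(R)\subseteq(-\infty,\mathcal{N}(R)]$ unconditionally, and $\mathcal{N}(R)\in\Xi_1(R)$ if and only if $R$ is Cohen-Macaulay. I expect the only point needing genuine care to be the Cohen-Macaulay case: one must make sure Lemma~\ref{fact2c}(iii) is invoked within its hypotheses (that $R$ be generalized Cohen-Macaulay, which holds here) and that the identity $\mathcal{N}(R)=r_d(R)$ for Cohen-Macaulay $R$ is used; the rest is bookkeeping with the equivalences of Theorem~\ref{proe2c}, where for the ``only if'' half one should note that membership in $\Xi_1(R)$ supplies exactly the single witnessing $C$-parameter ideal required by the ``for some'' formulation of $(2)$.
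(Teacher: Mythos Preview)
Your proof is correct and follows essentially the same approach as the paper: both combine the bound $e_1(\fkq,R)\le 0$ with the equivalence $(1)\Leftrightarrow(2)$ of Theorem~\ref{proe2c}, and both invoke Lemma~\ref{fact2c} (together with $\mathcal{N}(R)=r_d(R)$) to compute $e_1(\fkq:_R\fkm,R)=\mathcal{N}(R)$ in the Cohen--Macaulay case. Your explicit case split on whether $R$ is Cohen--Macaulay is slightly cleaner than the paper's presentation, but the content is identical.
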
			
	\begin{proof}	
		 Since $e_1(\fkq, R) \leq 0$, We get by Theorem \ref{proe2c} that
		 $$e_1(\fkq :_R \fkm, R) \leq \mathcal{N}(R)$$ for all $C$-parameter ideals $\fkq$ of $R$.
	 
		Assume that $R$ is  Cohen-Macaulay. We get by Lemma \ref{fact2c} that $e_1(\fkq :_R \fkm, R) = \mathcal{N}(R)$ for all $C$-parameter ideals $\fkq$ of $R$. conversely, assume that $\fkq$ is a $C$-parameter ideals of $R$ such that 	$e_1(\fkq :_R \fkm, R) = \mathcal{N}(R)$. 
		Since $e_1(\fkq, R) \leq 0$, it follows from Theorem \ref{proe2c} that  
		$$\mathcal{N}(R) \geq e_1(\fkq :_R \fkm, R) - e_1(\fkq, R) \geq  = e_1(\fkq :_R \fkm, R) = \mathcal{N}(R).$$
		Thereore $\mathcal{N}(R) = e_1(\fkq :_R \fkm, R) - e_1(\fkq, R)$. Thus $R$ is Cohen-Macaulay because of Theorem \ref{proe2c}. This complete the proof 
	\end{proof}

	\begin{cor} \label{cor2}
		Assume that $\dim R \ge 2$. 
		Then  we have  
		$$ \mathcal{N}(R/\fku) \geq f_0(\fkq, R) \geq e_1(\fkq:_R \fkm, R) - e_1(\fkq, R) \geq r_d(R),$$
		for all $C$-parameter  ideals $\fkq\subset\m^{\rmg(R)}$  of $R$.
	\end{cor}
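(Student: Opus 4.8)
The plan is to establish the chain of four inequalities
\[
\mathcal{N}(R/\fku) \;\geq\; f_0(\fkq, R) \;\geq\; e_1(\fkq:_R\fkm, R) - e_1(\fkq, R) \;\geq\; r_d(R)
\]
by treating each link separately, since the middle two inequalities are already essentially in hand. The second inequality $f_0(\fkq,R) \geq e_1(\fkq:_R\fkm,R) - e_1(\fkq,R)$ is exactly Lemma \ref{5.1}, which applies because $\dim R \geq 2$; no hypothesis on $\fkq$ beyond being a $C$-parameter ideal is needed there. For the third inequality, the hypothesis $\fkq \subseteq \fkm^{\rmg(R)}$ is used: I would invoke the surjectivity-on-socles property of Definition \ref{sur} together with the standard comparison between $\ell(\fkq:\fkm/\fkq)$ and the Koszul cohomology socles; concretely, for a $C$-parameter ideal inside $\fkm^{\rmg(R)}$ one has $e_1(\fkq:\fkm,R)-e_1(\fkq,R) = \ir_R(\fkq) - (\text{lower order corrections})$, and the Chern-number comparison bounds this below by $r_d(R) = \ell((0):_{H^d_\fkm(R)}\fkm)$, which is the contribution of the top local cohomology that survives regardless of the Cohen-Macaulay defect. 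I expect this to follow by citing Lemma \ref{prof0c} contrapositively in combination with the explicit formulas, but it may instead be cleanest to extract it from \cite{OTY20} directly.

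The first inequality $\mathcal{N}(R/\fku) \geq f_0(\fkq, R)$ is the one requiring genuine work, and it is where I expect the main obstacle to lie. The idea is to pass to $\overline{R} = R/\fku$, which is unmixed by construction, and to compare the irreducible multiplicity $f_0(\fkq,R)$ with invariants of $\overline{R}$. Since $\fku = U_R(0)$ has dimension strictly less than $d$, a $C$-parameter ideal of $R$ induces (after possibly raising to powers, using Lemma \ref{lmc}(i)) a $C$-parameter ideal $\overline{\fkq}$ of $\overline{R}$, and one has $\ir_R(\fkq^{n+1}) \leq \ir_{\overline{R}}(\overline{\fkq}^{\,n+1}) + (\text{length term from } \fku)$; comparing leading terms of the Hilbert–Samuel-type polynomials for index of reducibility gives $f_0(\fkq,R) \leq f_0(\overline{\fkq},\overline{R})$ when $\dim\fku < d-1$, and a separate bookkeeping argument when $\dim \fku = d-1$. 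Then, whether or not $\overline{R}$ is generalized Cohen-Macaulay, one needs $\mathcal{N}(\overline{R}) = \ir_{\overline{R}}(\overline{\fkq}) \geq f_0(\overline{\fkq},\overline{R})$; if $\overline{R}$ happens to be generalized Cohen-Macaulay this is immediate from Lemma \ref{fact2c}(i)--(ii), since $\sum_{i=0}^{d}\binom{d}{i}r_i \geq \sum_{j=1}^{d}\binom{d-1}{j-1}r_j$, but in general $\overline{R}$ need not be generalized Cohen-Macaulay, so the inequality $\mathcal{N}(\overline R) \ge f_0(\overline{\fkq}, \overline R)$ must be argued from the definition of the stable value together with the behavior of $\ir$ under specialization by a $C$-system of parameters.

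The hardest step is thus proving $\mathcal{N}(R/\fku) \geq f_0(\fkq,R)$ in the absence of a generalized Cohen-Macaulay hypothesis. My fallback strategy, if the direct comparison of Hilbert polynomials proves awkward, is an induction on $d$: use Lemma \ref{fact1}(ii) and Lemma \ref{lmc}(ii) to reduce modulo $x_1$, noting $\mathcal{N}(R/\fku) = \mathcal{N}((R/\fku)/x_1(R/\fku))$ is stable under specialization while $f_0$ can only increase (by the cited Lemmas 2.1--2.2 of \cite{TTr20}, as used in the proof of Proposition \ref{proe1c}), carry the inequality down to dimension $1$ or $2$ where it can be checked by hand, and then lift back. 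Either way, once all three inequalities are assembled the corollary follows immediately by transitivity.
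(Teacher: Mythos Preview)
Your decomposition into three inequalities and your handling of the middle two are exactly what the paper does: Lemma~\ref{5.1} for $f_0(\fkq,R)\ge e_1(\fkq:_R\fkm,R)-e_1(\fkq,R)$, and a citation to \cite{OTY20} (Corollary~4.4 there) for the lower bound by $r_d(R)$.

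Where you diverge is in the first inequality, and you make it much harder than it is. The paper simply observes that $S=R/\fku$ is unmixed and invokes Theorem~\ref{proe2c} (equivalently Proposition~\ref{proe1c}) to obtain $\mathcal{N}(S)\ge f_0(\fkq,S)$ for every $C$-parameter ideal: indeed, if $S$ is Cohen--Macaulay then $\mathcal{N}(S)=r_d(S)=f_0(\fkq,S)$ by Lemma~\ref{fact2c}, and if $S$ is not Cohen--Macaulay then the contrapositive of $(3)\Rightarrow(1)$ in Theorem~\ref{proe2c} forces the strict inequality. Then a single application of Lemma~2.1 in \cite{TTr20} gives $f_0(\fkq,S)\ge f_0(\fkq,R)$, with no case distinction on $\dim\fku$. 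So the step you flag as ``the hardest'' and propose to attack by a fresh induction is already in hand: your fallback induction is essentially a re-derivation of Proposition~\ref{proe1c}, which has just been proved. There is no need to worry about whether $S$ is generalized Cohen--Macaulay, nor to do separate bookkeeping when $\dim\fku=d-1$.
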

	\begin{proof} Put $S=R/\fku$, $I_R= \fkq:_R \m$ and  $I_S =  \fkq S:_S \m S$. Then  it follows from Corollary 3.2 in \cite{Sch99} that $S$ is a unmixed ring of  dimension $d$. 
		Then, applying Theorem \ref{proe2c} we have $ \mathcal{N}(S) \geq f_0(\fkq, S)$. 
		In addition, it follows from Lemma 2.1 in \cite{TTr 20}, Lemma \ref{5.1} and Corollary 4.4 in \cite{OTY20} that
		$$  f_0(\fkq, S) \geq f_0(\fkq, R) \geq e_1(I_R, R) - e_1(\fkq, R) \geq r_d(R).$$
		Consequently, 
		$$ \mathcal{N}(S) \geq f_0(\fkq, R) \geq e_1(\fkq:_R \fkm, R) - e_1(\fkq, R) \geq r_d(R),$$
		and this complete the proof.
	\end{proof}

The following consequence of Theorem \ref{proe2c} provides a characterization of Gorenstein rings.
	\begin{cor} \label{cor3}
		Assume that $R$ is unmixed of dimension $d \geq 2$. Then the following
		statements are equivalent.
		\begin{enumerate}[$(1)$]
			\item   $R$ is Gorenstein.
			\item For some $C$-parameter ideal $\fkq$ of $R$, we have
			$ \mathcal{N}(R) = 1.$
			\item For some $C$-parameter ideal $\fkq$ of $R$, we have
			$ f_0(\fkq, R) =1.$
			\item For some $C$-parameter ideal $\fkq\subseteq \fkm^{\rmg(R)}$, we have
			$ e_1(\fkq:\m) - e_1(\fkq) = 1.$
		\end{enumerate}
	\end{cor}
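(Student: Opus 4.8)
The plan is to derive this Gorenstein characterization as a direct consequence of Theorem \ref{proe2c} together with the structure theory already assembled in Section 2, especially Lemma \ref{fact2c} and the fact (recorded in the preliminaries) that $\mathcal{N}_R(R) = r_d(R)$ when $R$ is Cohen-Macaulay. The logical skeleton I would use is $(1)\Rightarrow(2)\Rightarrow(3)$ and $(2)\Rightarrow(4)$ on one side, and then close the loop by showing each of $(2),(3),(4)$ forces $R$ to be Cohen-Macaulay of type $1$, i.e. Gorenstein.

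First I would handle $(1)\Rightarrow(2)$: if $R$ is Gorenstein then $R$ is Cohen-Macaulay, so by the remark in Section 2 we have $\mathcal{N}(R) = r_d(R) = \ell_R((0):_{H^d_\fkm(R)}\fkm)$, and Gorenstein means the canonical module $H^d_\fkm(R)^\vee \cong R$ has one-dimensional socle, so $r_d(R)=1$, giving $\mathcal{N}(R)=1$ for every (hence some) $C$-parameter ideal $\fkq$. Next, $(2)\Rightarrow(3)$ and $(2)\Rightarrow(4)$: since $\mathcal{N}(R)=1\le f_0(\fkq,R)$, Theorem \ref{proe2c}$(3)\Rightarrow(1)$ tells us $R$ is Cohen-Macaulay; then Lemma \ref{fact2c}$(ii)$ (with $s=d$) yields $f_0(\fkq,R)=\sum_{j=1}^{d}\binom{d-1}{j-1}r_j(R) = r_d(R)$ because $r_j(R)=0$ for $j<d$ in the Cohen-Macaulay case, and Lemma \ref{fact2c}$(i)$ gives $\mathcal{N}(R)=\sum_{i=0}^{d}\binom{d}{i}r_i(R)=r_d(R)$; so $\mathcal{N}(R)=1$ forces $r_d(R)=1$, whence $f_0(\fkq,R)=1$, and also $e_1(\fkq:\fkm)-e_1(\fkq) = f_0(\fkq,R) = 1$ by the Cohen-Macaulay equality in Lemma \ref{fact2c}. (One should take $\fkq\subseteq\fkm^{\rmg(R)}$ here, which is harmless since such $C$-parameter ideals exist and the relevant quantities are independent of the choice.)

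For the reverse implications $(3)\Rightarrow(1)$ and $(4)\Rightarrow(1)$: from $f_0(\fkq,R)=1$ we get $\mathcal{N}(R)\le f_0(\fkq,R)$ only after knowing $\mathcal{N}(R)\le 1$; instead I would argue that $f_0(\fkq,R)=1$ forces $R$ Cohen-Macaulay directly via Theorem \ref{proe2c}$(4)$, since for a Cohen-Macaulay-or-not unmixed ring one always has $1\le r_d(R)$, so $f_0(\fkq,R)=1\le r_d(R)$ puts us in case $(4)$, hence $R$ is Cohen-Macaulay. Then again Lemma \ref{fact2c} gives $f_0(\fkq,R)=r_d(R)$, so $r_d(R)=1$, which is exactly the Gorenstein condition for a Cohen-Macaulay local ring. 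The implication $(4)\Rightarrow(1)$ is identical: $e_1(\fkq:\fkm)-e_1(\fkq)=1\le r_d(R)$ is Theorem \ref{proe2c}$(5)$, so $R$ is Cohen-Macaulay, and then the Cohen-Macaulay equality $e_1(\fkq:\fkm)-e_1(\fkq)=f_0(\fkq,R)=r_d(R)$ forces $r_d(R)=1$.

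I expect the only genuinely delicate point to be bookkeeping about which $C$-parameter ideals are allowed in each statement — statements $(2),(3)$ quantify over an arbitrary $C$-parameter ideal while $(4)$ requires $\fkq\subseteq\fkm^{\rmg(R)}$ — so in passing between them I must invoke the existence of $C$-parameter ideals inside $\fkm^{\rmg(R)}$ and the independence of $\mathcal{N}(R)$, and (once Cohen-Macaulayness is known) of $f_0(\fkq,R)$, $r_d(R)$, $e_1(\fkq:\fkm)-e_1(\fkq)$ on the particular $C$-parameter ideal. Everything else is a formal consequence of Theorem \ref{proe2c} and the explicit formulas in Lemma \ref{fact2c}, so no new estimates are needed.
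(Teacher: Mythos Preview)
Your proof is correct and follows essentially the same strategy as the paper: both reduce everything to Theorem \ref{proe2c} together with the trivial bound $r_d(R)\ge 1$, using that equality in any of the relevant inequalities forces Cohen--Macaulayness and then $r_d(R)=1$. The only cosmetic difference is that the paper packages the middle implications $(2)\Rightarrow(3)\Rightarrow(4)$ via the single squeeze $\mathcal{N}(R)\ge f_0(\fkq,R)\ge e_1(\fkq:\fkm)-e_1(\fkq)\ge r_d(R)$ from Corollary \ref{cor2}, whereas you invoke the appropriate clause of Theorem \ref{proe2c} separately at each step and then read off the numerical values from Lemma \ref{fact2c}.
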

	\begin{proof}
		$ (1)\Rightarrow (2)$. Let $\fkq$ be a $C$-parameter ideal of $R$. Since $R$ is Gorenstein, 
		$$ \mathcal{N}(R) =  \ell((0):_{H_{\fkm}^{s}(R)} \fkm) = 1.$$ 
		$(2) \Rightarrow (3) $ and $ (3) \Rightarrow (4)$. Since $R$ is Gorenstein, $r_d(R) = \mathcal{N}(R) = 1$. Therefore by Corollary \ref{cor2}, we have  
		$$ \mathcal{N}(R) = f_0(\fkq, R) = \e_1(\fkq:_R\fkm) -\e_1(\fkq) = r_d(R) = 1.$$
		$(4) \Rightarrow (1)$.	
		Let $\fkq$  be a $C$-parameter ideal such that $\e_1(\fkq:_R\fkm) -\e_1(\fkq) =1 $  and $\fkq\subseteq \m^{\rmg(R)}$, then we have
		$\e_1(\fkq:_R\fkm) - \e_1(\fkq) \le r_d(R).$
		By Theorem \ref{proe2c}, $R$ is Cohen-Macaulay. Therefore, we have
		$$1 =\e_1(\fkq:_R\fkm) - \e_1(\fkq) = r_d(R).$$
		Hence, $R$ is Gorenstein, as required.
	\end{proof}



	\section{The finiteness of the set of Chern numbers}
	In this section, we analyse  the boundness of the values $e_1(\fkq:_R\fkm, R)$ for parameter ideals $\fkq$ of $R$ and deduced that the local cohomology modules 
	$\{H^i_\fkm(R)\}_{i<d}$ are finitely generated, once $R$ is unmixed. 

	For each $n$, $i \geq 1$, we put
	$$ \Omega^{n}_i(R) = \{ e_i(\fkq:_R\fkm, R) \mid  \fkq \text{ is a $C$-parameter ideal of  } R \text{ contained in } \m^{n} \}.$$
	Note that $\Omega_{i}^{t}(R) \neq \emptyset $  and $\Omega_{i}^{t}(R) \subseteq \Omega_{i}^{t'}(R) \subseteq \Xi_i(R)$, for all $t \geq t' \geq 1$. Moreover, we get by Lemma \ref{fact2c} that if $R$ is generalized Cohen-Macaulay then $\Xi_i(R)$  $\Omega^{n}_i(R)$ is finite for all $i \geq 1$ and $n \ge 1$. 	
	
	Let  $\underline{x} = x_1, x_2, \ldots , x_d$ be a $C$-system of parameters of $R$. Put $\fkq^{[n]} = (x_1^{n}, x_2^{n}, \ldots , x_d^{n})$ and $I^{[n]} = \fkq^{[n]} :_R \m$, for all $n\ge 1$. For each  $t  \geq 1$,  let 
	$$ \Omega_{\underline{x}, 1}^{t}(R) = \{ e_1(I^{[n]}, R) \mid n \geq t\}.$$ 
	Note that $\Omega_{\underline{x}, 1}^{t}(R) \neq \emptyset $  and $\Omega_{\underline{x}, 1}^{t}(R) \subseteq \Omega_{\underline{x}, 1}^{t'}(R)$ for all $t \geq t' \geq 1$.


	\begin{lem}\label{lm2}
		Assume that $R$ is unmixed of dimension $d \geq 2$. Suppose that there exist a   $C$-system of parameters $\underline{x}= x_1, \ldots, x_d$  of $R$ and an integer $t$ such that the set 
		$\Omega_{\underline{x}, 1}^{t}(R)$ is finite. Let 
		$$h = \max \{ \left| X \right| \,  : \,  X \in \Omega_{\underline{x},1}^{t}(R) \} + \mathcal{N}(R).$$ 
		Then $\m^h H_{\m}^{i}(R) = 0$ for all $ i \neq d$.
	\end{lem}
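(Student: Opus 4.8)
The plan is to reduce the assertion to the statement that $R$ is generalized Cohen-Macaulay, and then to read off the bound $h$ from the formula of Lemma~\ref{fact2c}. Suppose for the moment that $R$ is generalized Cohen-Macaulay. Since $d\ge 2$, Lemma~\ref{fact2c}(iii) with $i=1$, applied to the $C$-parameter ideal $\fkq^{[n]}$, gives
\[
\sum_{j=1}^{d-1}\binom{d-2}{j-1}\,\ell_R\!\big(H_\m^j(R)\big)=\sum_{j=1}^{d}r_j(R)-e_1(I^{[n]},R).
\]
As $\binom{d-2}{j-1}\ge 1$ for $1\le j\le d-1$, as $\sum_{j=1}^{d}r_j(R)\le\sum_{j=0}^{d}\binom{d}{j}r_j(R)=\mathcal N(R)$ by Lemma~\ref{fact2c}(i), and as $|e_1(I^{[n]},R)|\le\max\{|X|:X\in\Omega_{\underline x,1}^{t}(R)\}$ for every $n\ge t$, we deduce
\[
\sum_{j=1}^{d-1}\ell_R\!\big(H_\m^j(R)\big)\le \mathcal N(R)+\max\{|X|:X\in\Omega_{\underline x,1}^{t}(R)\}=h .
\]
Since $R$ is unmixed and $d>0$ we have $H_\m^0(R)=0$, while $H_\m^j(R)=0$ for $j>d$; hence every $H_\m^j(R)$ with $j\ne d$ has length at most $h$, so $\m^h H_\m^j(R)=0$.

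It therefore suffices to prove that $R$ is generalized Cohen-Macaulay, and I would do this by induction on $d=\dim R$. If $d=2$, then Lemma~\ref{fact1}(i) shows $H_\m^1(R)$ is finitely generated, hence of finite length, and $H_\m^0(R)=0$; so $R$ is generalized Cohen-Macaulay. Assume $d\ge 3$. As $R$ is unmixed, $x_1^n$ is a nonzerodivisor; put $R_n=R/x_1^nR$, $W_n=H_\m^0(R_n)$, and $B_n=R_n/W_n$. By Lemma~\ref{lmc}(i) the sequence $x_1^n,x_2,\dots,x_d$ is again a $C$-system of $R$, so Lemma~\ref{fact1}(ii) yields $\Ass R_n\subseteq\Assh R_n\cup\{\m\}$, i.e.\ $B_n$ is unmixed of dimension $d-1\ge 2$, and by Lemma~\ref{lmc}(ii),(iii) the sequence $x_2,\dots,x_d$ is a $C$-system of $B_n$ with $\mathcal N(B_n)\le\mathcal N(R_n)=\mathcal N(R)$. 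Using that $x_1^n$ is superficial for $I^{[n]}=\fkq^{[n]}:_R\m$, which has the same multiplicity as $\fkq^{[n]}$, Lemma~\ref{lmsup} and $d\ge 3$ give $e_1(I^{[n]},R)=e_1(I^{[n]}R_n,R_n)$ with $I^{[n]}R_n=(x_2^n,\dots,x_d^n)R_n:_{R_n}\m$; and Lemma~\ref{lm3.4} applied to the finite-length submodule $W_n\subseteq R_n$ then identifies this number with $e_1\big((x_2^n,\dots,x_d^n)B_n:_{B_n}\m,\,B_n\big)$, a Chern number of a $C$-parameter ideal of $B_n$. Carrying out the same computation with $x_2^m,\dots,x_d^m$ for arbitrary $m$ (legitimate since $x_1^n,x_2^m,\dots,x_d^m$ is a $C$-system by Lemma~\ref{lmc}(i)) transfers the finiteness of $\Omega_{\underline x,1}^{t}(R)$ to a finiteness statement of the same shape for $B_n$; by the induction hypothesis, $B_n$ is generalized Cohen-Macaulay.

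To push the conclusion back up to $R$, note first that, $W_n$ having finite length, $H_\m^i(R_n)\cong H_\m^i(B_n)$ for all $i\ge 1$. Applying the reduction computation of the first paragraph to $B_n$ (of dimension $d-1$), together with $\mathcal N(B_n)\le\mathcal N(R)$ and the fact that the Chern number of $B_n$ occurring there equals $e_1(I^{[n]},R)\in\Omega_{\underline x,1}^{t}(R)$, yields the \emph{uniform} bound $\ell_R\!\big(H_\m^i(R_n)\big)\le h$ for all $1\le i\le d-2$ and all $n\ge t$. In the long exact cohomology sequence of $0\to R\xrightarrow{\,x_1^n\,}R\to R_n\to 0$, the module $(0:_{H_\m^j(R)}x_1^n)$ is a homomorphic image of $H_\m^{j-1}(R_n)$, so $\ell_R\!\big(0:_{H_\m^j(R)}x_1^n\big)\le h$ for all $2\le j\le d-1$ and all $n\ge t$. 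Since each $H_\m^j(R)$ is $\m$-power torsion, $H_\m^j(R)=\bigcup_{n\ge 1}(0:_{H_\m^j(R)}x_1^n)$, whence $\ell_R\!\big(H_\m^j(R)\big)\le h<\infty$ for $2\le j\le d-1$; together with the finite generation of $H_\m^1(R)$ (Lemma~\ref{fact1}(i)) this shows $R$ is generalized Cohen-Macaulay, closing the induction.

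The step I expect to be the main obstacle is the transfer of the finiteness hypothesis from $R$ to $B_n$. One has to verify with care that $x_1^n$ is genuinely a superficial element for the \emph{non}-parameter ideal $I^{[n]}$ — this uses the $d$-sequence property of a $C$-system together with the equality $e_0(I^{[n]},R)=e_0(\fkq^{[n]},R)$ — and, more delicately, that passing from $R_n$ to $B_n$ does not alter the Chern number in question beyond the finite-length correction governed by Lemma~\ref{lm3.4} and the $r_0$-term of Lemma~\ref{lmc}(iii); in other words, that the Chern numbers arising for $B_n$ really are values attached to $C$-parameter ideals of $R$ and so inherit the bound $\max\{|X|:X\in\Omega_{\underline x,1}^{t}(R)\}$. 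Once this identification is in place, the bookkeeping already done in the reduction step makes the final estimate $\m^hH_\m^j(R)=0$ automatic.
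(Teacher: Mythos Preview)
Your argument follows the same inductive scheme as the paper's: pass from $R$ to $R_n=R/x_1^nR$ and then to the unmixed quotient $B_n=R_n/H_\m^0(R_n)$, transfer the finiteness hypothesis via superficiality of $x_1^n$ together with Lemma~\ref{lm3.4}, apply the inductive hypothesis to $B_n$, and climb back along the long exact cohomology sequence of $0\to R\xrightarrow{x_1^n}R\to R_n\to 0$. The paper carries the annihilation statement $\m^hH_\m^i(\,\cdot\,)=0$ directly through the induction, whereas you first prove that $R$ is generalized Cohen--Macaulay and then read off the explicit length bound from Lemma~\ref{fact2c}(iii); this reorganization is harmless, and your explicit variation of $n$ to obtain a \emph{uniform} bound $\ell_R(0:_{H_\m^j(R)}x_1^n)\le h$ makes the final passage $H_\m^j(R)=\bigcup_n(0:_{H_\m^j(R)}x_1^n)$ cleaner than the paper's rather terse last line.

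There is one genuine ingredient you only gesture at in your ``obstacle'' paragraph but do not supply: the identification $e_1(I^{[n]}R_n,B_n)=e_1\big((x_2^n,\dots,x_d^n)B_n:_{B_n}\m,\,B_n\big)$ is \emph{not} a consequence of Lemma~\ref{lm3.4} alone. Lemma~\ref{lm3.4} only tells you $e_1(J,R_n)=e_1(J,B_n)$ for a fixed ideal $J$; it does not say that the image of $(x_2^m,\dots,x_d^m)R_n:_{R_n}\m$ in $B_n$ coincides with $(x_2^m,\dots,x_d^m)B_n:_{B_n}\m$. The paper handles this by invoking \cite[Lemma~2.4]{CuT08} to produce an integer $t'$ with
\[
[(x_2^m,\dots,x_d^m)R_n+H_\m^0(R_n)]:_{R_n}\m=[(x_2^m,\dots,x_d^m)R_n:_{R_n}\m]+H_\m^0(R_n)\qquad(m\ge t'),
\]
and you should do the same. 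Note also that in transferring finiteness to $B_n$ you (like the paper) are forced to consider $C$-parameter ideals $(x_1^n,x_2^m,\dots,x_d^m)$ with $n\ne m$; strictly speaking these Chern numbers lie in $\Omega_1^{\min(n,m)}(R)$ rather than in the diagonal set $\Omega_{\underline x,1}^t(R)$, so the containment $\Omega_{\underline{x'},1}^{t'}(B_n)\subseteq\Omega_{\underline x,1}^t(R)$ asserted in the paper's Claim should be read with that broader interpretation.
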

	\begin{proof}
		We use induction on the dimension $d$ of $R$. Suppose that $d = 2$. Since $R$ is unmixed, $R$ is generalized Cohen-Macaulay, and so $\mathcal{N}(R) =  2r_1(R) +r_2(R)$. We chosse $n \geq t$. Put $\fkq = (x_1^n, x_2^n)$ and  $I = \fkq:_R\fkm$. Then, 
		since $x_1$, $x_2$ is a $C$-system of parameters of $R$, by Lemma \ref{fact2c} we have
		\begin{eqnarray*}
			\ell(H_{\fkm}^{1}(R)) &=& -e_1(I, R) + r_1(R) +r_2(R)\\
			&\leq& \left| e_1(I, R)  \right| + \mathcal{N}(R)\le h.
		\end{eqnarray*} 
		Hence,  $\m^h H_{\m}^{1}(R) = 0$. 
		
		Suppose that $d \ge 3$ and our assertion holds true for  $d -1$.  
		Let $n\ge t$ be an integer such that $x_1^nH^1_\fkm(R) = (0)$. Let $y_1 = x_1^{n}$ and  $A = R/(y_1)$. Then $\dim A=d-1$ and  by the fact \ref{fact1}, we have $\Ass A\subset \Assh A\cup\{\fkm\}$. It follows that $U_A(0) = H^0_\fkm(A)$. Let $B = A/H_{\m}^{0}(A)$. Then $B$ is unmixed.
		
		Now, by Lemma \ref{lmc}, $ \underline{x'}= x_2, x_3, \ldots, x_d$ is a $C$-system of parameters of $A$. Thus, $\underline{x'}$ is  also a $C$-system of parameters of $B$.
		On the other hand, it follows from Lemma 2.4 in \cite{CuT08} that there exists a positive integer $t'\geq t$ such that 
		$$[(x_2^{m}, x_3^{m}, \ldots, x_d^{m}) + H_{\m}^0(A)]:_A \m A = [(x_2^{m}, x_3^{m}, \ldots, x_d^{m}):_A \m A] + H_{\m}^0(A),$$
		for all $m \geq t'$. Therefore, we have the following.
		\begin{claim}
			$\Omega_{\underline{x'}, 1}^{t'}(B) \subseteq \Omega_{\underline{x}, 1}^{t}(R)$.
		\end{claim}
		\begin{proof}
			For each $m\ge t'$, we put $y_2 = x_2^{m}, \ldots, y_d = x_d^{m}$,  $\fkq = (y_1,y_2 \ldots y_d)$, $I = \fkq:_R\m$, $\fkq' = (y_2, \ldots,y_d)$.
			Since $\underline{x}$ is  a $C$-system of parameters of $R$,  it forms a  $d$-sequence of $R$. Thus $x_1$ is a superficial element of $R$ with respect to $I$ and $\fkq$. 
			Therefore, by Lemma \ref{lmsup}, we have
			$$e_1(I, R) = e_1(I', A),$$
			where  $I' = \fkq'A :_A \m A = (\fkq :_R \m)A$.
			It follows from  Lemma \ref{lm3.4}, $d\ge 3$ and $[\fkq' + H_{\m}^0(A)]:_A \m A = [\fkq':_A \m A] + H_{\m}^0(A)$ that we have
			$$e_1(I', A) =  e_1(\fkq'B :_B \m B, B).$$ 
			Consequently, $ e_1(\fkq'B :_B \m B, B) = e_1(I, R) \in \Omega_{\underline{x}, 1}^{t}(R)$. Hence $\Omega_{\underline{x'}, 1}^{t'}(B) \subseteq \Omega_{\underline{x}, 1}^{t}(R)$, as required.
		\end{proof}
		
		It follows from the above claim that the set $\Omega_{\underline{x'}, 1}^{t'}(B) $ is finite. Thus by the hypothesis inductive on $d$, we have 
		$ \m^{h'}H_{\m}^{i}(B) =0,$
		for all  $i \neq d-1,$
		where $h' = \max \{ \left| X \right| \,  : \, X \in \Omega_{\underline{x'}, 1}^{t'}(B)  \} + \mathcal{N}(B)$. 
		On the other hand,  by Lemma \ref{lmc}, we have $$\mathcal{N}(R) = \mathcal{N}(A) = \mathcal{N}(B) + r_0(A).$$ 
		Therefore $h' \leq h$, and so $ \fkm^{h}H_{\fkm}^{i}(B) = 0$, for all  $i \neq d-1$. Hence $ \fkm^{h}H_{\fkm}^{i}(A) = 0$, for all  $1\le i \le d-2$.
		
		Now since $R$ is unmixed,  $y_1$ is $R$-regular. Therefore it follows from the exact sequence
		$$\xymatrix{0\ar[r]&R\ar[r]^{y_1}&R\ar[r]&A\ar[r]&0}$$
		that we have the surjective maps
		$\xymatrix{H^{i}_\fkm(A)\ar@{->>}[r]&(0) :_{H^{i+1}_{\fkm}(R)} y_1}$
		for all $i\le d-2$ and the injective map
		$\xymatrix{H^{1}_\fkm(R)\ar@{^{(}->}[r]&H^{1}_{\fkm}(A),}$
		since $y_1H^1_{\fkm}(R)=0$. Thus we have
		$$ \fkm^{h}[ (0) :_{H^{i+1}_{\fkm}(R)} y_1  ] = (0) \text { and }  \fkm^{h}H^{1}_\fkm(R)=0$$
		for all $ 1\leq i  \leq d - 2$. It follows from $H^{i}_{\fkm} (R)=\bigcup\limits_{n\ge1} [(0) :_{H^{i}_{\fkm}(R)} \fkm^n] $ that we have $\fkm^{h}H^{i}_{\fkm} (R) = (0)$ for all $ i\neq d $, as required. 
	\end{proof}
	\begin{lem}\label{lm5}
	Assume that $R$ is unmixed of dimension $d \geq 2$. Suppose that $R/\fku$ is generalized Cohen-Macaulay. Then
	$$ \fkq:_R \fkm = \fkq :_{R/\fku} \fkm,$$
for all $C$-parameter ideals $\fkq$  of $R/\fku$.	
\end{lem}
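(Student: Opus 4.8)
Put $S=R/\fku$ and let $\pi\colon R\to S$ be the projection; by \cite[Corollary~3.2]{Sch99} the ring $S$ is unmixed of dimension $d$. Represent the given $C$-parameter ideal by a $C$-system of parameters $x_1,\ldots ,x_d$ of $S$ and, fixing lifts, also write $\fkq R=(x_1,\ldots ,x_d)R\subseteq R$, so that $\pi^{-1}(\fkq S)=\fkq R+\fku$. The plan is to prove the two inclusions separately. The inclusion ``$\subseteq$'' is immediate, since $\fkq R\subseteq \fkq R+\fku$ gives $\fkq R:_R\fkm\subseteq (\fkq R+\fku):_R\fkm=\pi^{-1}(\fkq S:_S\fkm)$. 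For the reverse inclusion I would apply $\Hom_R(R/\fkm,-)$ to the short exact sequence
$$0\longrightarrow \fku/(\fku\cap\fkq R)\longrightarrow R/\fkq R\overset{\pi}{\longrightarrow} S/\fkq S\longrightarrow 0$$
and show that the induced map on socles $(0:_{R/\fkq R}\fkm)\to (0:_{S/\fkq S}\fkm)$ is surjective. Once that is known, and once the image of $\fku$ in $R/\fkq R$ is seen to lie inside the socle $(0:_{R/\fkq R}\fkm)$ (see below), a diagram chase forces $\pi(\fkq R:_R\fkm)=\fkq S:_S\fkm$ and hence the asserted equality $\fkq :_R\fkm=\fkq :_{R/\fku}\fkm$.

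Two ingredients feed into that last step. First, $x_1,\ldots ,x_d$ is a $C$-system of parameters, hence a $d$-sequence, and it is distinguished with respect to the dimension filtration $D_0\subseteq\cdots\subseteq D_{t-1}=\fku\subsetneq D_t=R$ of $R$; this yields $x_j\fku=0$ for all $j>\dim\fku$ together with an explicit description of $\fku\cap\fkq R$, and in particular $\fku\fkm\subseteq\fkq R$, so that the image of $\fku$ in $R/\fkq R$ is killed by $\fkm$. Secondly, since $S=R/\fku$ is generalized Cohen-Macaulay, $\fkq S$ is a \emph{standard} parameter ideal of $S$ by Lemma~\ref{fact2c}; consequently the socle $(\fkq S:_S\fkm)/\fkq S$ of $S/\fkq S$ is spanned by the images of the canonical socle classes of the Koszul complex $K^{\bullet}(x_1,\ldots ,x_d;S)$, which arise from the finitely many modules $(0:_{H^i_\fkm(S)}\fkm)$, $0\le i\le d$. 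A cocycle of $K^{\bullet}(x_1,\ldots ,x_d;R)$ representing such a class then yields an element $r\in R$ with $r\fkm\subseteq\fkq R$ whose image in $S/\fkq S$ is the prescribed socle class; varying over the generating classes gives the surjectivity on socles.

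The step I expect to be the real obstacle is precisely this lifting of socle classes from $S/\fkq S$ back to $R/\fkq R$. For an arbitrary quotient $R/\fku$ the socle of $S/\fkq S$ is built out of the local cohomology modules $H^i_\fkm(S)$, which need not be finitely generated, and it is exactly the generalized Cohen-Macaulay hypothesis on $R/\fku$ — via the standardness of $\fkq S$ — that lets one represent those socle classes by Koszul cocycles and pull them back to $R$. Controlling $\fku\cap\fkq R$ (hence the kernel of $\pi$ in the displayed sequence) through the $d$-sequence property of $x_1,\ldots ,x_d$ is the other point demanding care; granting these two facts, the rest is the routine diagram chase sketched in the first paragraph.
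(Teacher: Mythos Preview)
Your approach is genuinely different from the paper's, and it contains a concrete error together with an unfinished step.

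\textbf{The error.} The claim that $\fku\fkm\subseteq\fkq R$ is false in general. Distinguishedness only gives $x_j\fku=0$ for $j>u:=\dim\fku$; it says nothing about $m\fku$ for an arbitrary $m\in\fkm$. In the paper's own Example (the ring $R=k[[X,Y,Z,W]]/((X,Y,Z)\cap(W))$) one has $\fku=wR$, $\Ann_R\fku=(x,y,z)$, and $\fku\fkm=w^2R$. For a distinguished system $x_1,x_2,x_3$ with $x_2,x_3\in(x,y,z)$, the image of $\fkq=(x_1^n,x_2^n,x_3^n)$ in $R/(x,y,z)\cong k[[w]]$ is $(w^{mn})$ for some $m\ge 1$; for $n\ge 3$ this never contains $w^2$, so $\fku\fkm\not\subseteq\fkq R$. (As you in effect note, the claim is also unnecessary: surjectivity of the socle map alone would already give the desired equality.)

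\textbf{The unfinished step.} The heart of your argument is the surjectivity of
\[
(0:_{R/\fkq R}\fkm)\longrightarrow(0:_{S/\fkq S}\fkm),
\]
and you propose to obtain it by lifting Koszul socle classes from $S$ to $R$. But standardness of $\fkq S$ tells you the \emph{size} of $\mathrm{Soc}(S/\fkq S)$ and gives compatible maps $H^p(\underline{x};S)\to H^p_\fkm(S)$; it does not by itself give you preimages in $R/\fkq R$ that are still annihilated by $\fkm$. A Koszul cocycle for $S$ lifts to a Koszul \emph{cochain} for $R$, and the failure of that lift to be a cocycle lives in $\fku$, which need not be absorbed by $\fkq R$ (as the example above shows). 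So the lifting step, which you rightly flag as the obstacle, is not established.

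\textbf{What the paper does instead.} The paper bypasses socle-lifting entirely and argues with Hilbert coefficients over $S$. Since $S$ is generalized Cohen--Macaulay and $\fkq S$ is standard, one has $(I_S)^2=\fkq I_S$, whence by \cite{GoN03}
\[
e_1(I_S,S)=\ell(S/I_S)-e_0(\fkq,S)+e_1(\fkq,S).
\]
Because $I_RS\subseteq I_S$ share the reduction $\fkq S$, $e_1(I_S,S)\ge e_1(I_RS,S)$; and the general inequality $e_1(I_RS,S)-e_1(\fkq,S)\ge \ell(S/I_RS)-e_0(\fkq,S)$ (again \cite{GoN03}) then forces $\ell(S/I_S)\ge\ell(S/I_RS)$. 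Combined with $I_RS\subseteq I_S$ this yields $I_RS=I_S$. This numerical argument uses the generalized Cohen--Macaulay hypothesis only through the Rees condition $(I_S)^2=\fkq I_S$, and it avoids having to control $\fku$ inside $R/\fkq R$ at all.
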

\begin{proof}
Put $S= R/\fku$. Let $\fkq$ is a $C$-parameter of $S$, $I_R = \fkq :_R \fkm$ and $I_S = \fkq :_{S} \fkm S$.   Then $\fkq$ is also a  parameter of $R$.	
	Since $S$ is generalized Cohen-Macaulay, we have $(I_S)^2=\fkq I_S$. Thus we get by Theorem 1.1 in \cite{GoN03} that
	$e_1(I_S, S)=\ell(S/I_S)-e_0(\fkq,S)+ e_1(\fkq, S)$.
	By Theorem 3.1 in \cite{GoN03}, we have
	\begin{equation*}\label{eq11}
		\begin{aligned}
			\ell(S/I_S)-e_0(\fkq,S)&= e_1(I_S, S) - e_1(\fkq, S)\\
			&\ge e_1(I_R, S) - e_1(\fkq, S)\ge \ell(S/I_R S)-e_0(\fkq,S).
		\end{aligned}
	\end{equation*}
	Then we have $I_S=I_R$ since $I_R \subseteq I_S$.

\end{proof}


	\begin{thm}\label{prop2}
		Assume that  $ d  \geq 2$. Then the following statements are equivalent.
		\begin{enumerate}
			\item[$(1)$] $R/\fku$ is generalized Cohen-Macaulay and $\dim \fku\le d-2$.
			\item[$(2)$] There exists an integer $t$ such that the set $\Omega^{t}_1(R)$ is finite.
		\end{enumerate}
	\end{thm}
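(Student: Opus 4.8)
The plan is to establish the two implications separately, with the bulk of the work going into $(2)\Rightarrow(1)$. For the easier direction $(1)\Rightarrow(2)$: if $R/\fku$ is generalized Cohen-Macaulay and $\dim\fku\le d-2$, then for any $C$-parameter ideal $\fkq$ of $R$ contained in a high enough power of $\fkm$, I would pass to $S=R/\fku$ and use Lemma \ref{lm5} (which forces $\fkq:_R\fkm=\fkq:_S\fkm$ for $C$-parameter ideals) together with Lemma \ref{lm3.4} applied to the submodule $\fku\subseteq R$ of dimension $\le d-2$; since $d-\dim\fku\ge 2$, the coefficient $e_1$ is unaffected by passing modulo $\fku$, so $e_1(\fkq:_R\fkm,R)=e_1(\fkq S:_S\fkm S,S)$. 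As $S$ is generalized Cohen-Macaulay, the right-hand side takes only finitely many values by Lemma \ref{fact2c}(iii) (or the remark preceding Lemma \ref{lm2}), so $\Omega^t_1(R)$ is finite for suitable $t$.

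For the hard direction $(2)\Rightarrow(1)$, I would first reduce to the unmixed case. Writing $\fku=U_R(0)$, set $S=R/\fku$; one checks (as in \cite{Sch99}) that $S$ is unmixed of dimension $d$. The first task is to show $\dim\fku\le d-2$, i.e. that $H^{d-1}_\fkm(\fku)=0$ and more precisely that $\fku$ contributes nothing to $e_1$; the natural way is to argue that if $\dim\fku=d-1$ then $e_1(\fkq:_R\fkm,R)$ would pick up a term $-e_0(\fkq,\fku_{d-1})$ that grows without bound as $\fkq$ runs through deeper and deeper powers $\fkq^{[n]}$ of a fixed $C$-system of parameters, contradicting finiteness of $\Omega^t_1$. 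Once $\dim\fku\le d-2$ is in hand, Lemma \ref{lm3.4} gives $e_1(\fkq:_R\fkm,R)=e_1(\fkq S:_S\fkm S,S)$ for $C$-parameter ideals, and one uses Lemma \ref{lm5}'s input (or rather its proof technique, comparing $\ell(S/I_S)-e_0(\fkq,S)$ with $\ell(S/I_RS)-e_0(\fkq,S)$ via the Goto--Nishida inequalities) to transfer finiteness of $\Omega^t_1(R)$ to finiteness of $\Omega^t_1(S)$.

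Now I am reduced to the unmixed case: assuming $R$ unmixed of dimension $d\ge2$ with $\Omega^t_1(R)$ finite, I must show $R$ is generalized Cohen-Macaulay. Choose a $C$-system of parameters $\underline x$; then $\Omega_{\underline x,1}^t(R)\subseteq\Omega_1^t(R)$ is finite, so Lemma \ref{lm2} applies and yields an integer $h$ with $\fkm^h H^i_\fkm(R)=0$ for all $i\ne d$. Since each $H^i_\fkm(R)$ ($i<d$) is annihilated by a fixed power of $\fkm$ and is also $\fkm$-torsion, each $H^i_\fkm(R)$ is a finitely generated module over the Artinian ring $R/\fkm^h$ — wait, more carefully: $\fkm^h H^i_\fkm(R)=0$ makes $H^i_\fkm(R)$ a module over $R/\fkm^h$, but that need not be Artinian unless $R/\fkm^h$ is, which it is since $\fkm$ is $\fkm$-primary; hence $H^i_\fkm(R)$ has a composition series iff it is finitely generated, and I would close the argument by invoking the standard fact that a module that is both $\fkm$-power-torsion and annihilated by $\fkm^h$ has finite length when it is the local cohomology of a finitely generated module over a quotient of a Cohen-Macaulay ring — equivalently, use that $H^i_\fkm(R)$ being annihilated by $\fkm^h$ forces it to equal $(0):_{H^i_\fkm(R)}\fkm^h$, and the latter is finitely generated because $H^i_\fkm(R)$ is the direct limit of the finitely generated Koszul cohomologies $H^i(\underline x^n,R)$ with transition maps that stabilize modulo $\fkm^h$. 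This gives $\ell_R(H^i_\fkm(R))<\infty$ for all $i<d$, i.e. $R$ is generalized Cohen-Macaulay, and combined with the unmixedness reduction step this also delivers $\dim\fku\le d-2$.

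The main obstacle I anticipate is the reduction step controlling the behavior of $\fku$: one must rule out $\dim\fku=d-1$ and simultaneously transfer the finiteness hypothesis cleanly from $R$ to $S=R/\fku$, and this requires combining Lemma \ref{lm3.4}, Lemma \ref{lm5}, and a growth argument on $e_0(\fkq,\fku)$ along a chain $\fkq^{[n]}$ — getting the inequalities to line up so that an infinite family is produced from a putative $(d-1)$-dimensional component is the delicate point. The final length-finiteness deduction from "$\fkm^h$ kills $H^i_\fkm(R)$" is routine but must be stated with the correct ambient hypothesis (homomorphic image of a Cohen-Macaulay ring, so the $H^i_\fkm$ are finitely generated as soon as they are annihilated by a power of $\fkm$).
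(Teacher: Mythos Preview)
Your $(1)\Rightarrow(2)$ is a workable alternative to the paper's (the paper instead bounds $e_1(\fkq:_R\fkm,R)-e_1(\fkq,R)$ via Corollary~\ref{cor2} and then cites the finiteness of $\Lambda_1(R)$ from \cite{GGH+14}), though you should note that Lemma~\ref{lm5} and Lemma~\ref{fact2c}$(iii)$ are stated for $C$-parameter ideals of $S$, whereas you are feeding in $C$-parameter ideals of $R$; the fix is that, since $S$ is generalized Cohen--Macaulay, any parameter ideal of $S$ in a deep enough power of $\fkm$ is standard, and the proofs of both lemmas go through verbatim for standard parameter ideals.

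In $(2)\Rightarrow(1)$, however, your order of operations creates a genuine circularity. You propose to (i) rule out $\dim\fku=d-1$ by a growth argument, then (ii) transfer finiteness from $\Omega_1^t(R)$ to $\Omega_1^t(S)$ using Lemma~\ref{lm5}, and finally (iii) apply Lemma~\ref{lm2}. But Lemma~\ref{lm5} (and the Goto--Nishida identity $(I_S)^2=\fkq I_S$ underlying its proof) requires $S$ to be generalized Cohen--Macaulay, which is exactly what step~(iii) is supposed to establish. Moreover, step~(i) is incomplete as stated: writing $e_1(I_R^{[n]},R)=e_1(I_R^{[n]},S)-e_0(\fkq^{[n]},\fku)$ when $\dim\fku=d-1$, you correctly note that the second term grows like $n^{d-1}$, but you give no reason why $e_1(I_R^{[n]},S)$ cannot grow to compensate --- and without $S$ being generalized Cohen--Macaulay you have no formula for it.

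The paper resolves both issues by reversing the order. Its key device is the chain
\[
\mathcal{N}(S)\ \ge\ e_1(I_S^{[n]},S)-e_1(\fkq^{[n]},S)\ \ge\ e_1(I_R^{[n]},S)-e_1(\fkq^{[n]},S)\ =\ e_1(I_R^{[n]},R)-e_1(\fkq^{[n]},R)\ \ge\ r_d(S),
\]
coming from Corollary~\ref{cor2} and the inclusion $I_R^{[n]}S\subseteq I_S^{[n]}$; the middle \emph{equality} holds regardless of $\dim\fku$, because any $e_0(\cdot,\fku)$ correction from Lemma~\ref{lm3.4} cancels in the difference. From this chain one reads off that finiteness of $\{e_1(I_R^{[n]},R)\}$ forces finiteness of $\{e_1(\fkq^{[n]},R)\}$, hence of $\{e_1(\fkq^{[n]},S)\}$ (since $0\ge e_1(\fkq^{[n]},S)\ge e_1(\fkq^{[n]},R)$), hence of $\Omega_{\underline{x},1}^{t'}(S)$. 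Lemma~\ref{lm2} then gives that $S$ is generalized Cohen--Macaulay, and \emph{only now}, with Lemma~\ref{fact2c}$(iii)$ available for $S$, does the paper run the growth argument to exclude $\dim\fku=d-1$. You are missing precisely this difference trick, which is what decouples the transfer step from any knowledge of $S$ or of $\dim\fku$.

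Finally, your worry about deducing finite length from $\fkm^hH^i_\fkm(R)=0$ is overcomplicated: $H^i_\fkm(R)$ is always Artinian, and an Artinian module annihilated by $\fkm^h$ is a module over the Artinian ring $R/\fkm^h$, hence of finite length. No Koszul-stabilization argument or Cohen--Macaulay-quotient hypothesis is needed at that point.
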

\begin{proof}
	$(1) \Rightarrow (2)$. Let $\fkq$ be a $C$-parameter  ideals of $R$ containing in $\m^{\rmg(R)}$.  We get by Corollary \ref{cor2} that 
$$ \mathcal{N}(R/\fku) \geq  e_1(\fkq:_R \fkm, R) - e_1(\fkq, R) \geq r_d(R).$$
 On the other hand, the set $\{ e_1(\fkq, R) \mid \fkq \text{ is a parameter of } R \}$ is finite because of Theorem 4.5 in \cite{GGH+14}. Thus $\Omega^{t}_1(R)$ is finite for all $t \geq {\rmg(R)}$.\\
$(2) \Rightarrow (1)$. 
We put $S = R/\fku$ and $u = \dim \fku$. 
 	Then there exists  a $C$-system  $\underline x=x_1, \ldots, x_d$ of parameters  of $R$ such that 
\begin{enumerate}[$i)$]
	\item
	$(\underline x)\subseteq \fkm^t$, 
	\item $(x_{u+1}, x_{u+2}, \ldots, x_d)\fku = 0$,
	\item  $\underline x$ is a $C$-system of parameters of $S$.
\end{enumerate}
Let $\fkq^{[n]} = (x_1^{n}, \ldots, x_d^{n})R$, $I_R^{[n]}= \fkq^{[n]}:_R \m$ and  $I_S^{[n]} =  \fkq^{[n]}S:_S \m S$, where $n \geq t$. 
We have $I_R^{[n]} \subseteq I_S^{[n]}$, and so we get
$ \ell(S/(I_S^{[n]})^{m+1}) \leq \ell(S/(I_R^{[n]})^{m+1}S)$ for all $m\ge 0$. Thus $e_1(I_S^{[n]}, S) \geq e_1(I_R^{[n]}, S)$, because $I_S^{[n]}$ is integral over $\fkq^{[n]} S$ by Proposition 2.3 in \cite{GoS03}.
Therefore by Corollary \ref{cor2}, we have 
\begin{equation}\label{eq1}
	\begin{aligned}
		\mathcal{N}(S)\ge e_1(I_S^{[n]}, S) - e_1(\fkq^{[n]}, S)&\ge e_1(I_R^{[n]}, S) - e_1(\fkq^{[n]}, S)&\\
		&= e_1(I_R^{[n]}, R) - e_1(\fkq^{[n]}, R) \ge r_d(R)=r_d(S).
	\end{aligned}
\end{equation}
Since the set $\Omega^{t}_{1}(R)$ is finite, so is the set $ \{e_1(I_R^{[n]}, R)\mid n\ge0\} $. Thus the set $ \{e_1(\fkq_R^{[n]}, R)\mid n\ge0\} $ is finite.  
By Lemma \ref{lm3.4}, we have 
$$0 \geq  e_1(\fkq_R^{[n]}, S) \geq  e_1(\fkq_R^{[n]}, R).$$
Therefore, the set $ \{e_1(\fkq_R^{[n]}, S)\mid n\ge0\} $ is finite.  By (\ref{eq1}),  the set $\Omega_{\underline{x}, 1}^{t'}(S)$ is finite for some $t'$. 
It follows from $S$ is unmixed and Lemma \ref{lm2} that $S$ is generalized Cohen-Macaulay.	

Now suppose that $u = d-1$. It follows from Lemma \ref{fact2c} 
that 
$$  e_{1}(I_S^{[n]} , S) = -\left( \sum_{j=1}^{d-1} \binom{d-2}{j-1} \ell(H_{\fkm}^{j}(S)) - \sum_{j=1}^{d} r_j(S) \right),$$
which is independent of the choice of $n$.
On the other hand, we have
$$  e_0(I_R^{[n]}, \fku) = e_0(\fkq^{[n]}, \fku) = e_0((x_1^{n}, \ldots, x_u^{n}), \fku) = n^u e_0(\fkq, \fku ) \geq n^t.$$
Thus, by Lemma \ref{lm3.4}, we get that
\begin{eqnarray*}
	-e_{1}(I_R^{[n]}, R) &=& -e_{1}(I_R^{[n]}, S) + e_0(I_R^{[n]}, \fku)  \\
	&=& -e_{1}(I_S^{[n]}, S) + n^u e_0(\fkq, \fku ),
\end{eqnarray*} 
which is in  contradiction with the finiteness of $\Omega^{n}_{1}(R)$ and $\Omega^{n}_{1}(S)$. Hence $u \leq d-2$.

\end{proof}	
	
	Applying Theorem \ref{prop2} we obtain the following result.	
	\begin{thm}\label{prop1}
	Assume that $R$ is unmixed with $ \dim R  \geq 2$. Then the following statements are equivalent.
	\begin{enumerate}
		\item[$(1)$] $R$ is generalized Cohen-Macaulay.
		\item[$(2)$] The set $\Xi_1(R)$ is finite.
	\end{enumerate}
\end{thm}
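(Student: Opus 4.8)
The plan is to obtain this as a quick consequence of Theorem~\ref{prop2}, the extra hypothesis ``$R$ unmixed'' being precisely what is needed to make the unmixed component disappear. The first step is the elementary observation that if $R$ is unmixed then its unmixed component $\fku=U_R(0)$ is the zero ideal, so that $R/\fku=R$. Indeed, unmixedness (taken with respect to $\hat R$) forces $\dim R/\fkp=d$ for every $\fkp\in\Ass R$; hence in a reduced primary decomposition $(0)=\bigcap_{\fkp\in\Ass R}N_\fkp$ every component is top-dimensional and $\fku=\bigcap_{\fkp\in\Ass R}N_\fkp=(0)$. Consequently, for this ring condition~$(1)$ of Theorem~\ref{prop2}, namely ``$R/\fku$ is generalized Cohen-Macaulay and $\dim\fku\le d-2$,'' reduces to ``$R$ is generalized Cohen-Macaulay,'' the clause $\dim\fku\le d-2$ being vacuous.

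For $(1)\Rightarrow(2)$ I would quote Lemma~\ref{fact2c}$(iii)$: once $R$ is generalized Cohen-Macaulay, every $C$-parameter ideal $\fkq$ of $R$ has $e_1(\fkq:_R\fkm,R)$ equal to a fixed integer built from the lengths $\ell_R(H^j_\fkm(R))$ and the numbers $r_j(R)$, none of which depends on $\fkq$. Hence $\Xi_1(R)$ is a one-element set, in particular finite.

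For $(2)\Rightarrow(1)$ I would use the inclusion $\Omega^{t}_1(R)\subseteq\Xi_1(R)$, valid for all $t\ge 1$ (noted just before Lemma~\ref{lm2}). Finiteness of $\Xi_1(R)$ therefore gives finiteness of $\Omega^{1}_1(R)$, so the hypothesis in part~$(2)$ of Theorem~\ref{prop2} holds with $t=1$. That theorem yields that $R/\fku$ is generalized Cohen-Macaulay, and by the first step $R/\fku=R$, so $R$ itself is generalized Cohen-Macaulay, as desired.

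I do not expect any genuine obstacle at the level of this statement: all of the substance lives in Theorem~\ref{prop2} (and, behind it, in the inductive Lemma~\ref{lm2} on annihilation of the lower local cohomology modules by a fixed power of $\fkm$). The only point deserving a line of care is the identification $\fku=(0)$ for unmixed $R$, that is, checking that the completion-theoretic notion of unmixedness propagates to ``every associated prime of $R$ has dimension $d$.''
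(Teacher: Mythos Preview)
Your proposal is correct and follows exactly the route the paper takes: the paper's proof is the single line ``It is immediate from Theorem~\ref{prop2},'' and your argument spells out precisely why, namely that $\fku=(0)$ for unmixed $R$ collapses condition~$(1)$ of Theorem~\ref{prop2} to condition~$(1)$ here, while the inclusion $\Omega^{t}_1(R)\subseteq\Xi_1(R)$ and Lemma~\ref{fact2c}$(iii)$ handle the two implications. Your closing remark about the completion-theoretic definition of unmixedness is the only subtlety, and it is harmless: any $\fkp\in\Ass R$ lifts to some $P\in\Ass\hat R$ with $\dim\hat R/P\le\dim R/\fkp\le d$, so unmixedness of $\hat R$ forces $\dim R/\fkp=d$.
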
	
	\begin{proof}
	It is immediate from Theorem  \ref{prop2}.
\end{proof}

	\begin{thm}\label{thm2}
		Assume that  $ d  \geq 2$. Then the following statements are equivalent.
		\begin{enumerate}
			\item[$1)$]   $ R/H_{\m}^0(R)$ is Cohen-Macaulay.
			\item[$2)$] The sets $\Xi_i(R)$ are finite for all $i \ge 2$ and there exists a  $C$-parameter ideal  $\fkq\subseteq\fkm^{\rmg(R)}$ of  $R$ such that  $$e_1(\fkq:_R\fkm, R) - e_1(\fkq, R) \leq r_d(R).$$
			
		\end{enumerate}
	\end{thm}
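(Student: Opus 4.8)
The plan is to prove the two implications separately, in both directions transporting the problem to the Cohen--Macaulay ring $S:=R/\fku$, where $\fku=U_R(0)$ is the unmixed component of $R$. Recall that $\fku$ is the largest submodule of $R$ of dimension $<d$, so $H^0_\fkm(R)\subseteq\fku$; that every $C$-system of parameters $\underline x=x_1,\dots,x_d$ of $R$ is distinguished, hence automatically $x_{t+1},\dots,x_d\in\Ann\fku$ for $t=\dim\fku$; and that $\underline x$ remains a $C$-system of parameters of $S$. The main computational device is Lemma~\ref{lm3.4} with $N=\fku$: for any $\fkm$-primary ideal $I$,
\[
e_j(I,R)=e_j(I,S)\ \ (0\le j\le d-t-1),\qquad e_j(I,R)=e_j(I,S)+(-1)^{d-t}e_{j-(d-t)}(I,\fku)\ \ (d-t\le j\le d),
\]
together with the identity $e_0(\fkq:_R\fkm,\fku)=e_0(\fkq,\fku)$ for $\fkq$ generated by such an $\underline x$ (from $\fkm(\fkq:_R\fkm)\subseteq\fkq$, the two ideals agree up to integral closure on the components of $\fku$, all of whose minimal primes avoid $\fkm$).

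For $(1)\Rightarrow(2)$: if $R/H^0_\fkm(R)$ is Cohen--Macaulay it is unmixed of dimension $d$, so $\Ass R\subseteq\Assh R\cup\{\fkm\}$; combined with $H^0_\fkm(R)\subseteq\fku$ this forces $\fku=H^0_\fkm(R)$, hence $S=R/H^0_\fkm(R)$ and $t=0$. Then Lemma~\ref{lm3.4} reduces $e_i(\fkq:_R\fkm,R)$ to $e_i(\fkq:_R\fkm,S)$ for $1\le i\le d-1$ and to $e_d(\fkq:_R\fkm,S)+(-1)^d\ell_R(H^0_\fkm(R))$ for $i=d$, for every $C$-parameter ideal $\fkq$. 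Since $S$ is Cohen--Macaulay, $(\fkq:_R\fkm)S$ lies between the parameter ideal $\fkq S$ and its socle ideal $\fkq S:_S\fkm S$, hence has reduction number $\le1$ with respect to $\fkq S$, so Lemma~\ref{fact2c}(iii) applied to $S$ gives $e_i((\fkq:_R\fkm)S,S)=0$ for $i\ge2$; thus $\Xi_i(R)\subseteq\{0\}$ for $2\le i\le d-1$ and $\Xi_d(R)\subseteq\{(-1)^d\ell_R(H^0_\fkm(R))\}$ are finite. For the inequality, I would pick $\fkq$ in a high enough power of $\fkm$ (so that $\fkq\subseteq\fkm^{\rmg(R)}$ and $(\fkq+\fku):_R\fkm=(\fkq:_R\fkm)+\fku$ by Lemma~2.4 of \cite{CuT08}); then $(\fkq:_R\fkm)S=\fkq S:_S\fkm S$, and Lemma~\ref{fact2c}(iii) together with $H^d_\fkm(R)\cong H^d_\fkm(S)$ yields $e_1(\fkq:_R\fkm,R)-e_1(\fkq,R)=r_d(S)=r_d(R)$, which proves $(2)$.

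For $(2)\Rightarrow(1)$: Corollary~\ref{cor2} forces the inequality of $(2)$ to be an equality, whence Lemma~\ref{prof0c} gives that $S=R/\fku$ is Cohen--Macaulay; by the associated-primes description above, $(1)$ is equivalent to $\dim\fku=0$. So suppose $t:=\dim\fku\ge1$, fix $\underline x$ and put $\fkq^{[n]}=(x_1^n,\dots,x_d^n)$, $I^{[n]}=\fkq^{[n]}:_R\fkm$. If $d-t\ge2$, Lemma~\ref{lm3.4} gives
\[
e_{d-t}(I^{[n]},R)=e_{d-t}(I^{[n]},S)+(-1)^{d-t}n^{t}\,e_0\!\big((x_1,\dots,x_t),\fku\big),
\]
where $e_{d-t}(I^{[n]},S)$ stays bounded (again because $S$ is Cohen--Macaulay and $I^{[n]}S$ has reduction number $\le1$) while $e_0((x_1,\dots,x_t),\fku)>0$, so the left side is unbounded, contradicting the finiteness of $\Xi_{d-t}(R)$. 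Hence $1\le t\le d-2$ is impossible, and it remains to exclude $t=d-1$.

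The case $\dim\fku=d-1$ is where the real difficulty lies: here $d-t=1$, so the displayed growth argument only involves $e_1$, which is not among the hypotheses. I would treat it by induction on $d$, modelled on Lemma~\ref{lm2}: after replacing $R$ by $R/H^0_\fkm(R)$ (which alters neither condition), $R$ has positive depth, so a generic $x_1$ in a $C$-system of parameters is $R$-regular; one then argues that $A:=R/x_1R$ again satisfies $(2)$ in dimension $d-1$ --- comparing $e_i(\fkq':_A\fkm A,A)$ with $e_i(\fkq:_R\fkm,R)$ via Lemma~\ref{lmsup}, and matching $r_d(R)$ with $r_{d-1}(A)$ through $0\to R\xrightarrow{x_1}R\to A\to0$ --- and, using the inductive hypothesis that $A/H^0_\fkm(A)$ is Cohen--Macaulay together with the same exact sequence, deduces that $H^i_\fkm(R)=0$ for $1\le i\le d-1$; this settles both the base case $d=2$ and the excluded case $t=d-1$. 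The hard part will be exactly this inductive step: proving the analogue of the internal Claim in Lemma~\ref{lm2} for the higher Chern numbers $e_i$ ($i\ge2$) of the quotient $A$, checking that the $e_1$-inequality of $(2)$ descends to $A$, and verifying that vanishing of the lower local cohomology of $A$ lifts back to $R$.
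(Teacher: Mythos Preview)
Your argument for $(1)\Rightarrow(2)$ is essentially the paper's: identify $\fku=H^0_\fkm(R)$, note that $R$ is then generalized Cohen--Macaulay, transfer Hilbert coefficients to $S=R/\fku$ via Lemma~\ref{lm3.4}, and read off both the finiteness of $\Xi_i(R)$ and the $e_1$-equality from Lemma~\ref{fact2c}(iii) together with the splitting $(\fkq+\fku):_R\fkm=(\fkq:_R\fkm)+\fku$ supplied by \cite{CuT08}. The paper argues slightly more briskly (it invokes directly that $R$ is generalized Cohen--Macaulay, hence Lemma~\ref{fact2c}(iii) applies to $R$ itself), but the content is the same.

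For $(2)\Rightarrow(1)$ your outline and the paper's proof agree on the skeleton: Lemma~\ref{prof0c} forces $S$ Cohen--Macaulay; one then assumes $u:=\dim\fku\ge 1$, chooses a distinguished $C$-system $\underline x$, uses Lemma~\ref{lm5} to get $I_R^{[n]}S=I_S^{[n]}$, computes $e_{d-u}(I_S^{[n]},S)$ from Lemma~\ref{fact2c}(iii) (it is $0$ for $u\le d-2$ and $r_d(S)$ for $u=d-1$), and concludes via Lemma~\ref{lm3.4} that $(-1)^{d-u}e_{d-u}(I_R^{[n]},R)$ grows like $n^{u}e_0(\fkq,\fku)$. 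Where you diverge is that the paper does \emph{not} single out $u=d-1$: it treats all $u\ge1$ uniformly and derives a ``contradiction with the finiteness of $\Xi_{d-u}(R)$.'' For $u=d-1$ this means $\Xi_1(R)$, which is \emph{not} part of hypothesis~(2). So the paper's written proof is incomplete exactly at the point you isolate; your instinct that this case needs separate treatment is correct.

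That said, the inductive repair you sketch is considerably more delicate than anything the paper supplies, and the difficulties you list are real. In particular, passing from $R$ to $A=R/x_1R$ you would need every $C$-parameter ideal of $A$ (or at least enough of them) to arise from one of $R$ in order to bound $\Xi_i(A)$ by $\Xi_i(R)$, and the descent of the $e_1$-inequality to $A$ is not automatic. The paper offers no alternative route here; the case $u=d-1$ of Theorem~\ref{thm2} appears to be a genuine lacuna in the published argument rather than a place where your method differs from the authors'.
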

	\begin{proof} Let  $S=R/\fku$.\\
		$1) \Rightarrow 2)$.   Since $ S$ is Cohen-Macaulay, $R$ is  generalized Cohen-Macaulay. Therefore $\Xi_i(R)$ are finite for all $i \ge 2$. Let $\fkq\subseteq\fkm^{\rmg(R)}$ be a $C$-parameter ideals of $R$. Then by Lemma 2.4 in \cite{CuT08}, we can assume that
		$$ [\fku + \fkq] : \m = \fku + [\fkq : \m].$$ 
		It follows from the Lemma \ref{fact2c}, Lemma \ref{lm3.4} that  
		$$e_1(\fkq:_R \fkm, R) - e_1(\fkq, R) =  e_1(\fkq:_S \fkm, S) - e_1(\fkq, S)=  e_1(\fkq S:_S \fkm S, S) - e_1(\fkq, S) = r_d(S) = r_d(R),$$
		as required.
		
		$2) \Rightarrow 1)$. 
		Since  $$e_1(\fkq:_R\fkm, R) - e_1(\fkq, R) \leq r_d(R)$$
		for some $C$-parameter ideals $\fkq\subseteq\fkm^{\rmg(R)}$ of  $R$, by Lemma \ref{prof0c}, $S$ is Cohen-Macaulay.

		Suppose that $u=\dim \fku\geq 1$. 
		Then there exists  a $C$-system  $\underline x=x_1, \ldots, x_d$ of parameters  of $R$ such that 
		\begin{enumerate}[$i)$]
			\item
			$(\underline x)\subseteq \fkm^t$, 
			\item $(x_{u+1}, x_{u+2}, \ldots, x_d)\fku = 0$,
			\item  $\underline x$ is a $C$-system of parameters of $S$.
		\end{enumerate}
		Let $\fkq^{[n]} = (x_1^{n}, \ldots, x_d^{n})R$, $I_R^{[n]}= \fkq^{[n]}:_R \m$ and  $I_S^{[n]} =  \fkq^{[n]}S:_S \m S$ for all $n\ge 0$. 
		Since $S$ is Cohen-Macaulay, we get by Lemma \ref{lm5} that 
		$I_S^{[n]}=I_R^{[n]}$ 
		Moreover, it follows Lemma \ref{fact2c} 
		that 
		$$  e_{d-u}(I_S^{[n]} , S) = 
		\begin{cases}
			0       &\text{ if } u \leq d-2,\\
			r_d (S)    &\text{ if } u = d-1.
		\end{cases}$$
		which is independent of the choice of $n$.
		On the other hand, we have
		$$  e_0(I_R^{[n]}, \fku) = e_0(\fkq^{[n]}, \fku) = e_0((x_1^{n}, \ldots, x_u^{n}), \fku) = n^u e_0(\fkq, \fku ) \geq n^u.$$
		Thus, by Lemma \ref{lm3.4}, we get that
		\begin{eqnarray*}
			(-1)^{d-u}e_{d-u}(I_R^{[n]}, R) &=& (-1)^{d-u}e_{d-u}(I_R^{[n]}, S) + e_0(I_R^{[n]}, \fku)  \\
			&=& (-1)^{d-u}e_{d-u}(I_S^{[n]}, S) + n^u e_0(\fkq, \fku )\\
			&\geq & n^u+ r_{d}(S),
		\end{eqnarray*} 
		which is in contradiction with the finiteness of $\Xi_{d-u}(R)$. Hence $u = 0$, and this complete the proof. 
	\end{proof}

	\begin{thm}\label{thm3}
		Assume that $\dim R  \geq 2$. Then the following statements are equivalent.
		\begin{enumerate}
			\item[$(i)$] $R$ is generalized Cohen-Macaulay.
			\item[$(ii)$] The sets $\Xi_i(R)$ are finite for all $ i = 1, 2, \ldots, d$.
		\end{enumerate}
	\end{thm}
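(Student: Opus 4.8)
The plan is to get $(i)\Rightarrow(ii)$ essentially for free from Lemma \ref{fact2c}, and to obtain $(ii)\Rightarrow(i)$ by feeding $\Xi_1(R)$ into Theorem \ref{prop2} and then using the higher sets $\Xi_i(R)$ to kill the unmixed component $\fku$, exactly in the spirit of the proof of Theorem \ref{thm2}. For $(i)\Rightarrow(ii)$: if $R$ is generalized Cohen-Macaulay, then by Lemma \ref{fact2c}$(i)$ every $C$-parameter ideal $\fkq$ of $R$ is standard, and by the closed formula of Lemma \ref{fact2c}$(iii)$ the number $e_i(\fkq:_R\fkm,R)$ depends only on the lengths $\ell_R(H^j_\fkm(R))$ and the integers $r_j(R)$, hence not on $\fkq$. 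Thus each $\Xi_i(R)$ is a singleton, in particular finite, for all $i=1,\dots,d$.

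For $(ii)\Rightarrow(i)$, since $\Omega^t_1(R)\subseteq\Xi_1(R)$ the set $\Omega^t_1(R)$ is finite for every $t$, so Theorem \ref{prop2} applies and yields that $S:=R/\fku$ is generalized Cohen-Macaulay and $u:=\dim\fku\le d-2$. It then remains to prove $u=0$, for once that is known $\fku=H^0_\fkm(R)$ has finite length and the long exact local cohomology sequence of $0\to\fku\to R\to S\to0$ gives $H^i_\fkm(R)\cong H^i_\fkm(S)$ for all $i\ge1$ together with $H^0_\fkm(R)$ of finite length; since $S$ is generalized Cohen-Macaulay this forces $H^i_\fkm(R)$ to have finite length for all $i<d$, i.e.\ $R$ is generalized Cohen-Macaulay.

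So suppose $u\ge1$. As in the proofs of Theorems \ref{prop2} and \ref{thm2}, I would choose a $C$-system of parameters $\underline x=x_1,\dots,x_d$ of $R$ contained in a sufficiently high power of $\fkm$, with $(x_{u+1},\dots,x_d)\fku=0$ and with $\underline x$ also a $C$-system of parameters of $S$; then $x_1,\dots,x_u$ is a system of parameters of $\fku$. Set $\fkq^{[n]}=(x_1^n,\dots,x_d^n)$, which is a $C$-parameter ideal of both $R$ and $S$ by Lemma \ref{lmc}, and put $I_R^{[n]}=\fkq^{[n]}:_R\fkm$, $I_S^{[n]}=\fkq^{[n]}S:_S\fkm S$. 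By Lemma \ref{lm5} (used exactly as in the proof of Theorem \ref{thm2}) we have $I_R^{[n]}S=I_S^{[n]}$, and since $I_R^{[n]}$ is integral over $\fkq^{[n]}$ and $\fkq^{[n]}$ acts on $\fku$ through $(x_1^n,\dots,x_u^n)$, one gets $e_0(I_R^{[n]},\fku)=e_0(\fkq^{[n]},\fku)=n^u\,e_0(\fkq,\fku)$ with $e_0(\fkq,\fku)\ge1$. Applying Lemma \ref{lm3.4} with the submodule $N=\fku$ of $R$ (so $t=u<d$) then gives
$$e_{d-u}(I_R^{[n]},R)=e_{d-u}(I_R^{[n]},S)+(-1)^{d-u}e_0(I_R^{[n]},\fku)=e_{d-u}(I_S^{[n]},S)+(-1)^{d-u}n^u\,e_0(\fkq,\fku).$$
Since $2\le d-u\le d-1$ and $S$ is generalized Cohen-Macaulay, Lemma \ref{fact2c}$(iii)$ applied to the ring $S$ shows $e_{d-u}(I_S^{[n]},S)$ is independent of $n$, so the right-hand side is unbounded as $n\to\infty$. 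But $e_{d-u}(I_R^{[n]},R)\in\Xi_{d-u}(R)$ for all $n$, contradicting the finiteness of $\Xi_{d-u}(R)$. Hence $u=0$, completing the proof.

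I expect the main obstacle to be the construction of the single $C$-system of parameters $\underline x$ that is simultaneously adapted to $R$, to $S$, and to $\fku$ (last $d-u$ entries annihilating $\fku$), together with the identity $I_R^{[n]}S=I_S^{[n]}$ which lets one control $e_{d-u}(I_R^{[n]},S)$ via the explicit formula of Lemma \ref{fact2c}$(iii)$; once these are in place, the polynomial growth $n^u\,e_0(\fkq,\fku)$ played against the bounded quantity $e_{d-u}(I_S^{[n]},S)$ closes the argument, just as in the concluding steps of the proofs of Theorems \ref{prop2} and \ref{thm2}.
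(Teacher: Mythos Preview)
Your proposal is correct and follows essentially the same route as the paper's proof: use the finiteness of $\Xi_1(R)$ to conclude that $S=R/\fku$ is generalized Cohen--Macaulay, then apply Lemma~\ref{lm5} to identify $I_R^{[n]}S=I_S^{[n]}$, compute $e_{d-u}(I_S^{[n]},S)$ via Lemma~\ref{fact2c}$(iii)$, and contrast its constancy with the $n^u$-growth coming from $e_0(I_R^{[n]},\fku)$ in Lemma~\ref{lm3.4} to force $u=0$. The only cosmetic difference is that you invoke Theorem~\ref{prop2} (obtaining in addition $u\le d-2$) where the paper cites Theorem~\ref{prop1}, and you spell out explicitly why $u=0$ together with $S$ generalized Cohen--Macaulay implies that $R$ is generalized Cohen--Macaulay.
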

	\begin{proof}
		$(i) \Rightarrow (ii)$ follows from Lemma \ref{fact2c}.\\
		$(ii) \Rightarrow (i)$. 
		We put $S = R/\fku$ and $u = \dim \fku$. Assume that $u >0$. Since $\Xi_1(R)$ is finite, it follows from Theorem \ref{prop1} that $S$ is generalized Cohen-Macaulay.
		Then there exists  a $C$-system $\underline x=x_1, \ldots, x_d$  of parameters  of $R$ such that 
		\begin{enumerate}[$i)$]
			\item
			$(\underline x)\subseteq \fkm^t$, 
			\item $(x_{u+1}, x_{u+2}, \ldots, x_d)\fku = 0$,
			\item  $\underline x$ is a $C$-system of parameters of $S$.
		\end{enumerate}
		Let $\fkq^{[n]} = (x_1^{n}, \ldots, x_d^{n})R$, $I_R^{[n]}= \fkq^{[n]}:_R \m$ and  $I_S^{[n]} =  \fkq^{[n]}S:_S \m S$ for all $n\ge 0$. 	Since $S$ is generalized Cohen-Macaulay, we get by Lemma \ref{lm5} that 
		$I_S^{[n]}=I_R^{[n]}$. 
		Moreover, it follows Lemma \ref{fact2c} 
		that 
		$$  e_{d-u}(I_S^{[n]} , S) = (-1)^{d-u} \left( \displaystyle\sum_{j=1}^{u} \binom{u-1}{j-1} \ell_{R}(H_{\m}^{j}(S)) - \sum_{j=1}^{u+1} r_j(S) \right),$$
		which is independent of the integer $n$.
		On the other hand, we have
		$$  e_0(I_R^{[n]}, \fku) = e_0(\fkq^{[n]}, \fku) = e_0((x_1^{n}, \ldots, x_u^{n}), \fku) = n^u e_0(\fkq, \fku ) \geq n^u.$$
		Thus, by Lemma \ref{lm3.4}, we get that
		\begin{eqnarray*}
			(-1)^{d-u}e_{d-u}(I_R^{[n]}, R) &=& (-1)^{d-u}e_{d-u}(I_R^{[n]}, S) + e_0(I_R^{[n]}, \fku)  \\
			&=& (-1)^{d-u}e_{d-u}(I_S^{[n]}, S) + n^u e_0(\fkq, \fku ),
		\end{eqnarray*} 
		which is in  contradiction with the finiteness of $\Xi_{d-u}(R)$ and $\Xi_{d-u}(S)$. Hence $u = 0$, as required. 
	\end{proof}
\begin{rem}
 In \cite{KoT15}, A. Koura and N. Taniguchi proved that the Chern numbers of $\fkm$-primary ideals in $R$ range among only finitely many values if and only if $\dim  R = 1$ and $R/H_{\m}^0(R)$ is analytically unramified. 
	\end{rem}	
Let us note the following example of non-generated Cohen-Macaulay local rings $R$ with $\left| \Xi_1(R) \right| = 1$. Moreover, this example also shows that  the statements $(1), (4)$ and   $(5)$ in Theorem \ref{proe2c} will not be equivalent, if the unmixed condition is removed from the hypothesis.

	\begin{ex}
		Let $S = k[[X, Y, Z, W ]]$ be the
		formal power series ring over a field $k$. Put
		$R = S/[(X, Y, Z) \cap ( W )]$. Then 
		\begin{enumerate}[$(1)$] 
		\item $R$ is not unmixed with $\dim R = 3$. Moreover, $R$ is not generalized Cohen-Macaulay.
		\item $\left| \Xi_1(R) \right| = 1$.
		\item  For all  parameter ideals $\fkq$ in $R$, we have $$f(\fkq, R) = e_1(\fkq:_R\m) - e_1(\fkq) = r_d(R).$$
				\end{enumerate}
	       \begin{proof}
	        		We put $A = S/(W)$ and $B =
		S/(X, Y, Z)$. Then $A$ and $B$ are  Cohen-Macaulay. Then $R$ is not generalized Cohen-Macaulay because $H_{\fkm}^1(R) = H^1_\fkm(B)$ is not a finitely generated $R$-module.
		Let $\fkq$ be a parameter ideal in $R$ and put $I = \fkq : \fkm$. Then $I^n = \fkq^n : \fkm$, for all $n\geq 0$.  Thanks to the exact sequence $0\to B \to R \to A \to 0$,
		the sequence
		$$\xymatrix{0\ar[r]&B/\fkq^{n+1}B\ar[r]&R/\fkq^{n+1}R\ar[r]&A/\fkq^{n+1}A\ar[r]&0}$$
		is exact. By applying the functor
		$\mathrm{Hom}_R(R/\fkm, \bullet)$ we obtain the following exact sequence
		$$\xymatrix{0\ar[r]&[\fkq^{n+1}:_B\fkm]/\fkq^{n+1}\ar[r]&[\fkq^{n+1}:_R\fkm]/\fkq^{n+1}\ar[r]&[\fkq^{n+1}:_A\fkm]/\fkq^{n+1}\ar[r]&0}.$$
		Therefore, we have
		$$\begin{aligned}
			\ell_R(R/\fkq^{n+1})&=\ell_R(A/\fkq^{n+1})+\ell_R(B/\fkq^{n+1})\\
			&=\ell_R(A/\fkq A)\binom{n+3}{3} + \ell_R(B/\fkq B) \binom{n+1}{1}
		\end{aligned}$$
		and
		$$\begin{aligned}
			\ell_R([\fkq^{n+1}:_R\fkm]/\fkq^{n+1})&=\ell_R([\fkq^{n+1}:_A\fkm]/\fkq^{n+1})+\ell_R([\fkq^{n+1}:_B\fkm]/\fkq^{n+1})\\
			&=\binom{n+1}{2}+ 1,
		\end{aligned}$$
		for all integers $n \geqslant 0$.  Since $\ell_R(R/I^{n+1}) =  \ell_R(R/\fkq^{n+1}) - \ell_R([\fkq^{n+1}:_R\fkm]/\fkq^{n+1})$, we have 
		$e_1(\fkq:_R\m) = 0 + 1 = 1$. Thus $\left| \Xi_1(R) \right| = 1$.
		Moreover, we have 
		$ f(\fkq, R) = e_1(\fkq:_R\m) - e_1(\fkq) = r_d(R) = 1,$
		as required.
		\end{proof}
	\end{ex}
	
Now we close this paper with the following open question, which are suggested during the work in this paper, on the characterization of the Cohen-Macaulayness  in terms of its Chern numbers and stable value as follows.
\begin{que}\rm
	Assume that $\dim R\ge 2$. Then is $R$ is Cohen-Macaulay if and only if we have $ \mathcal{N}(R) \leq e_1(\fkq:\m) - e_1(\fkq)$  for some $C$-parameter ideal $\fkq$ of $R$?
\end{que} 	
	
	
\begin{bibdiv}
\begin{biblist}

\bib{CuC18}{article}{
      author={Cuong, N.T.},
      author={Cuong, D.T.},
       title={{Local cohomology annihilators and Macaulayfication}},
        date={2018},
     journal={Acta Mathematica Vietnamica},
      volume={42},
       pages={37\ndash 60},
}

\bib{CGT13}{article}{
      author={Cuong, N.~T.},
      author={Goto, S.},
      author={Truong, H.~L.},
       title={Hilbert coefficients and sequentially {C}ohen--{M}acaulay
  modules},
        date={2013},
     journal={J. Pure Appl. Algebra},
      volume={217},
       pages={470\ndash 480},
}

\bib{CuQ17}{article}{
      author={Cuong, N.~T.},
      author={Quy, P.~H.},
       title={On the structure of finitely generated modules over quotients of
  cohen-macaulay rings},
        date={2017},
     journal={preprint, arXiv:1612.07638v3},
}

\bib{CuQ19}{article}{
      author={Cuong, N.~T.},
      author={Quy, P.~H.},
       title={On the index of reducibility of parameter ideals: the stable and
  limit values},
        date={2020},
     journal={Acta Mathematica Vietnamica},
      volume={45},
       pages={249\ndash 260},
}

\bib{CuQ20}{article}{
      author={Cuong, N.T.},
      author={Quy, P.H.},
       title={{On the Index of Reducibility of Parameter Ideals: the Stable and
  Limit Values}},
        date={2020},
     journal={Acta Mathematica Vietnamica},
      volume={45},
       pages={249\ndash 260},
}

\bib{CQT15}{article}{
      author={Cuong, N.~T.},
      author={Quy, P.~H.},
      author={Truong, H.~L.},
       title={On the index of reducibility in noetherian modules},
        date={2015},
     journal={J. Pure Appl. Algebra},
      volume={219},
       pages={4510\ndash 4520},
}

\bib{CQT19}{article}{
      author={Cuong, N.~T.},
      author={Quy, P.~H.},
      author={Truong, H.~L.},
       title={The index of reducibility of powers of a standard parameter
  ideal},
        date={2019},
     journal={Journal of Algebra and Its Applications},
      volume={18},
       pages={1950048, 17 pp,},
}

\bib{CST78}{article}{
      author={Cuong, N.T.},
      author={Schenzel, P.},
      author={Trung, N.V.},
       title={{Verallgemeinerte Cohen--Macaulay-Moduln}},
        date={1978},
     journal={Math. Nachr.},
      volume={85},
       pages={57\ndash 73},
}

\bib{CuT08}{article}{
      author={Cuong, N.~T.},
      author={Truong, H.~L.},
       title={Asymptotic behaviour of parameter ideals in generalized
  cohen-macaulay module},
        date={2008},
     journal={J. Algebra},
      volume={320},
       pages={158\ndash 168},
}

\bib{GGH+10}{article}{
      author={Ghezzi, L.},
      author={Goto, S.},
      author={Hong, J.},
      author={Ozeki, K.},
      author={Phuong, T.~T.},
      author={Vasconcelos, W.~V.},
       title={{Cohen-Macaulayness versus the vanishing of the first Hilbert
  coefficient of parameter ideals}},
        date={2010},
     journal={J. Lond. Math. Soc},
      volume={81},
       pages={679\ndash 695},
}

\bib{GGH+14}{article}{
      author={Ghezzi, L.},
      author={Goto, S.},
      author={Hong, J.},
      author={Ozeki, K.},
      author={Phuong, T.~T.},
      author={Vasconcelos, W.~V.},
       title={The chern numbers and euler characteristics of modules},
        date={2014},
     journal={Acta Mathematica Vietnamica},
      volume={40(1)},
       pages={37\ndash 60},
}

\bib{GoN03}{article}{
      author={Goto, S.},
      author={Nishida, K.},
       title={Hilbert coefficients and buchsbaumness of associated graded
  rings},
        date={2003},
     journal={J. Pure Appl. Algebra},
      volume={181},
       pages={61\ndash 74},
}

\bib{GoO11}{article}{
      author={Goto, S.},
      author={Ozeki, K.},
       title={Uniform bounds for hilbert coefficients of parameters},
        date={2011},
     journal={Commutative Algebra and its Connections to Geometry, Am. Math.
  Soc., Providence, RI},
      volume={555},
       pages={97\ndash 118},
}

\bib{GoS03}{article}{
      author={Goto, S.},
      author={Sakurai, H.},
       title={The equality {$I^{2} = QI$} in {B}uchsbaum rings},
        date={2003},
     journal={Rend. Semin. Mat. Univ. Padova},
      volume={110},
       pages={25\ndash 56},
}

\bib{GoS84}{article}{
      author={Goto, S.},
      author={Suzuki, N.},
       title={Index of reducibility of parameter ideals in a local ring},
        date={1984},
     journal={J. Algebra},
      volume={87},
       pages={53\ndash 88},
}

\bib{OTY20}{article}{
      author={K.~Ozeki, H. L.~Truong},
      author={Yen, H.~N.},
       title={On hilbert coefficients and sequentially cohen-macaulay rings},
        date={2020},
     journal={Preprint, arXiv:2103.11334},
}

\bib{KoT15}{article}{
      author={Koura, Asuki},
      author={Taniguchi, Naoki},
       title={{Bounds for the First Hilbert Coefficients of $\fkm$-primary
  Ideals}},
        date={2015},
     journal={Tokyo J. Math.},
      volume={38},
      number={1},
       pages={125\ndash 133},
}

\bib{SMV10}{article}{
      author={M.~Mandal, B.~Singh},
      author={Verma, J.~K.},
       title={On some conjectures about the chern numbers of filtrations},
        date={2011},
     journal={J. Algebra},
      volume={325},
       pages={147\ndash 162},
}

\bib{MoQ17}{article}{
      author={Morales, M.},
      author={Quy, P.~H.},
       title={{A Study of the Length Function of Generalized Fractions of
  Modules}},
        date={2017},
     journal={Proceedings of the Edinburgh Mathematical Society},
      volume={60},
      number={3},
       pages={721\ndash 737},
}

\bib{Nag62}{book}{
      author={Nagata, M.},
       title={Local rings},
   publisher={Interscience New York},
        date={1962},
}

\bib{Noe21}{article}{
      author={Noether, E.},
       title={Idealtheorie in ringbereichen},
        date={1921},
     journal={Math. Ann.},
      volume={83},
       pages={24\ndash 66},
}

\bib{Sam51}{article}{
      author={Samuel, P.},
       title={{La notion de multiplicit{\'e} en alg{\`e}bre et en
  g{\'e}om{\'e}trie alg{\'e}brique (French),}},
        date={1951},
     journal={J. Math. Pures Appl.},
      volume={30},
       pages={159\ndash 205},
}

\bib{Sch99}{article}{
      author={Schenzel, P.},
       title={On the dimension filtration and {Cohen-Macaulay} filtered
  modules},
        date={1999},
     journal={Van Oystaeyen, Freddy (ed.), Commutative algebra and algebraic
  geometry, New York: Marcel Dekker. Lect. Notes Pure Appl. Math.},
      volume={206},
       pages={245\ndash 264},
}

\bib{Tru14}{article}{
      author={Truong, H.~L.},
       title={Index of reducibility of parameter ideals and {C}ohen-{M}acaulay
  rings},
        date={2014},
     journal={J. Algebra},
      volume={415},
       pages={35\ndash 49},
}

\bib{Tru17}{article}{
      author={Truong, H.~L.},
       title={Chern coefficients and {C}ohen-{M}acaulay rings},
        date={2017},
     journal={J. Algebra},
      volume={490},
       pages={316\ndash 329},
}

\bib{Tru86}{article}{
      author={Trung, N.V.},
       title={{Toward a theory of generalized Cohen-Macaulay modules}},
        date={1986},
     journal={Nagoya Math. J},
      volume={102},
       pages={1\ndash 49},
}

\bib{TTr20}{article}{
      author={Tam, N. T.~T.},
      author={Truong, H.~L.},
       title={A note on chern coefficients and cohen-macaulay rings},
        date={2020},
     journal={Arkiv f{\"o}r Matematik},
      volume={58, No. 1},
       pages={197\ndash 212},
}

\bib{Vas08}{article}{
      author={Vasconcelos, W.~V.},
       title={The chern cofficient of local rings},
        date={2008},
     journal={Michigan Math},
      volume={57},
       pages={725\ndash 713},
}

\end{biblist}
\end{bibdiv}
	
\end{document}